\date{\today}
\date{\today}
\newcommand{\comment}[1]{}
\theoremstyle{theorem}
    \newtheorem{theorem}{Theorem}
    \newtheorem{lemma}[theorem]{Lemma}
    \newtheorem{corollary}[theorem]{Corollary}
\theoremstyle{definition} 
    \newtheorem{definition}[theorem]{Definition}
    \newtheorem{fact}[theorem]{Fact}
    \newtheorem{remark}[theorem]{Remark}
    \newtheorem{example}[theorem]{Example}
    \newtheorem{exercise}[theorem]{Exercise}
\def\R{\mathbb{R}}
\def\b{{\bf b}}
\def\a{{\sigma}}
\def\b{{\lambda}}
\def\l{\left}
\def\r{\right}
\def\<{\langle}
\def\>{\rangle}
\newcommand{\E}{\mbox{\bf E}}
\def\bar{\overline}
\def\P{{\bf P}}
\newcommand\Tr{{\mbox{Tr}}}
\newcommand\mnote[1]{} 
\newcommand\be{\begin{equation*}}
\newcommand\ee{\end{equation*}}
\newcommand\ben{\begin{equation}}
\newcommand\een{\end{equation}}
\newcommand\bes{\begin{eqnarray*}}
\newcommand\ees{\end{eqnarray*}}
\newcommand\bex{\begin{exercise}}
\newcommand\eex{\end{exercise}}
\newcommand\beg{\begin{example}}
\newcommand\eeg{\end{example}}
\newcommand\benu{\begin{enumerate}}
\newcommand\eenu{\end{enumerate}}
\newcommand\beit{\begin{itemize}}
\newcommand\eeit{\end{itemize}}
\newcommand\berk{\begin{remark}}
\newcommand\eerk{\end{remark}}
\newcommand\bdefn{\begin{defintion}}
\newcommand\edefn{\end{definition}}
\newcommand\bthm{\begin{theorem}}
\newcommand\ethm{\end{theorem}}
\newcommand\bprf{\begin{proof}}
\newcommand\eprf{\end{proof}}
\newcommand\blem{\begin{lemma}}
\newcommand\elem{\end{lemma}}
\newcommand{\sm}{{\raise0.3ex\hbox{$\scriptstyle \setminus$}}}
\def\l{\left}
\def\r{\right}
\def\CHI{\mathchoice%
{\raise2pt\hbox{$\chi$}}%
{\raise2pt\hbox{$\chi$}}%
{\raise1.3pt\hbox{$\scriptstyle\chi$}}%
{\raise0.8pt\hbox{$\scriptscriptstyle\chi$}}}
\def\smalloplus{\raise1pt\hbox{$\,\scriptstyle \oplus\;$}}
\title{Lyapunov exponents and eigenvalues of products of random  matrices}
\author{Nanda Kishore Reddy}
\address{Department of Mathematics\\
        Indian Institute of Science\\
        Bangalore 560012, India}
\email{kishore11@math.iisc.ernet.in}
\thanks{Partially supported by UGC Centre for Advanced Studies. Research of Nanda Kishore reddy is supported by CSIR-SPM fellowship, CSIR, Government of India.}
\begin{document}
\maketitle

\begin{abstract}
 Let $X_1,X_2, \ldots $ be a sequence of  $i.i.d$ real (complex) $d \times d $  invertible random matrices with common distribution $\mu$ and  $\a_1(n), \a_2(n), \ldots , \a_d(n)$ be  the singular values, $\b_1(n), \b_2(n), \ldots , \b_d(n)$ be  the eigenvalues of $X_nX_{n-1}\cdots X_1$ in the decreasing order of their absolute values for every $n$. It is known that if $\E(\log^{+}\|X_1\|)< \infty$, then  with probability one for all $1 \leq p \leq d$,
$$
 \lim_{n \to \infty} \frac{1}{n}\log \a_p(n)=\gamma_p,
$$ 
where  ${\gamma_1,\gamma_2 \ldots \gamma_d}$ are the Lyapunov exponents associated with $\mu$. In this paper we show that under certain support and moment conditions on $\mu$, the absolute values of eigenvalues also exhibit the same asymptotic behaviour. In fact, a stronger asymptotic relation holds between the singular values and the eigenvalues $i.e.$ for any $r>0$ with probability one for all $1 \leq p \leq d$,
$$
\lim_{n \to \infty} \frac{1}{n^r}\log \l(\frac{|\b_p(n)|}{\a_p(n)}\r)= 0,
$$
which implies that  the fluctuations of the eigenvalues have the same asymptotic distribution as that of the corresponding singular values. Isotropic random matrices and also random matrices with $i.i.d$ real elements,  which have some finite moment and bounded density whose support contains an open set, are shown to satisfy the moment and support conditions under which the above relations hold. 
\end{abstract}
\section{Introduction}
Lyapunov exponents and singular values of products of $i.i.d$ random matrices of fixed order have been studied extensively (see \cite{prm}, \cite{kesten}). Only recently, the study of eigenvalues of such matrices has begun, though  they have been considered first in the setting of dynamical systems in \cite{goldman}  and there the absolute values of eigenvalues have been conjectured to show the same asymptotic behaviour as that of singular values.  Recent comparative studies \cite{akemann1}, \cite{ipsen} of  singular values and eigenvalues  in the case of Ginibre matrix ensembles  have verified the conjecture to be true  in the respective cases. And also  \cite{forrester2} mentions this in the case of random truncated unitary matrices. For a summary of results on this topic, we refer the reader to \cite{ipsen2}. In \cite{akemann1},  asymptotic distributions of first order fluctuations of (logarithm of) both  the singular values and the  absolute values  of eigenvalues for product of  Ginibre matrices have been computed and shown to be equal and also a proof of this conjecture in the case of $2 \times 2$ isotropic random matrices  has been given. Recently, a full proof in the case of isotropic random matrices has been obtained in \cite{nanda}. In this paper we obtain a proof in more generality with only few conditions on the moments and  support of the distribution of the  random matrices involved.

The paper is organised as follows. In Section \ref{section1} we provide definitions required to describe those moment and support conditions on the distribution  of the random matrices. we also provide many useful facts there (all taken from \cite{prm}). In Section \ref{section2} we prove a lemma (\ref{lemma}) regarding the right and left singular vectors of the product random matrices. Using the lemma, we obtain a proof for the asymptotic relation (\ref{theorem2}) between the singular values and  the absolute values of the eigenvalues of product random matrices. In the case of random matrices with $i.i.d$ elements, we  derive sufficient conditions on the distribution of elements for the result to hold. Throughout the paper we considered only real random matrices. As the facts quoted here from \cite{prm} carry over to the complex case also, Lemma \ref{lemma} and Theorem \ref{theorem2} can easily be verified to be true in the complex case as well.

\section{Definitions and Facts}\label{section1}

Consider a sequence  $\lbrace Y_n, n\geq 1 \rbrace$ of $i.i.d$ $d \times d$ real random matrices with common distribution $\mu$. Let $S_n=Y_n\cdots Y_1$ for all $n\geq 1$. Then the Lyapunov exponents are defined as follows. 
\begin{definition}
If $\E(\log^{+}\|Y_1\|)< \infty$ (we write $a^+$ for $\max (a,0)$), the {\bf first Lyapunov exponent} associated with $\mu$ is the element ${\gamma}$ of $\R\cup \{-\infty\}$ defined by 
$$
\gamma= \lim_{n \to \infty} \frac{1}{n}\E(\log \|S_n\|)
$$
\end{definition}
The norm used is spectral norm. Since all norms on the finite dimensional vector spaces are equivalent, $\gamma$ is independent of the chosen norm.  The existence of the limit in the definition follows from the subadditivity of the  sequence $\{\E(\log \|S_n\|), n\geq 1\}$ and Fekete's subadditive lemma. The definitions of next Lyapunov  exponents involve  exterior powers of matrices. 

Given a matrix $M$, the $p$-th exterior power $\wedge^p M$ ($1 \leq p \leq d$) is defined to be the ${d \choose p} \times {d \choose p}$ matrix, whose rows and columns are indexed by $p$-sized subsets of $\{1,2 \ldots d\}$ in dictionary order, such that $I,J$-th element is given by 
$$
\wedge^p M_{I,J}=[M]_{I,J},
$$
where $[M]_{I,J}$ denotes the $p \times p$ minor of $M$ that corresponds to the rows  with index in $I$ and the columns with index in $J$. ($[M]_I$ denotes $[M]_{I,I}$). By Cauchy-Binet formula, it can be seen that 
$$\wedge^p (MN)= (\wedge^p M)(\wedge^p N).$$
Let $M=U\Sigma V^*$ be a singular value decomposition of $M$ and $\a_1(M)\geq \a_2(M) \cdots \geq \a_d(M)$ be singular values of $M$, then 
$$\wedge^p M=(\wedge^p U)(\wedge^p \Sigma)(\wedge^p V^*)
$$ 
is a singular value decomposition of $\wedge^p M$. Therefore $$\|\wedge^p M\|=\a_1(M) \cdots  \a_p(M).$$ Define ${ {\ell(M)}= \max (\log^+\|M\|, \log^+\|M^{-1}\|)}.$ Observe that $\frac{1}{p}|\log \|\wedge^p M\|| \leq  \ell(M)$ for all $1 \leq p \leq d$.
 Let $\mu$ be as in previous definition.  

\begin{definition}
The {\bf Lyapunov exponents} ${\gamma_1,\gamma_2 \ldots \gamma_d}$ associated with $\mu$ are defined inductively by $\gamma=\gamma_1$ and for $p\geq 2$ 
$$\sum_{i=1}^p \gamma_i=\lim_{n \to \infty} \frac{1}{n}\E(\log \|\wedge^p S_n\|).
$$
If for some $p$ $\sum_{i=1}^{p-1} \gamma_i= -\infty$, we put $\gamma_p=\gamma_{p+1}=\cdots \gamma_d= -\infty$. 
\end{definition}

Note that if $Y_1$ is  invertible with probability one and $\E(\log ^{+}\|Y_1^{-1}\|)$ is also finite, then for any $p$
$$
\frac{1}{p}|\sum_{i=1}^p \gamma_i| \leq  \E(\ell(Y_1)) < \infty
$$
so that all the Lyapunov exponents are finite. 

The following fact gives almost sure convergence in the definition of Lyapunov exponents, if  $Y_1$ is invertible random matrix. (See Theorem 4.1, Chapter I, \cite{prm}).
\begin{fact}
Let $Y_1,Y_2,\ldots$ be $i.i.d$ matrices in $ GL_d(\R)$ and $S_n=Y_n\cdots Y_1$. If $\E(\log^{+}\|Y_1\|)< \infty$, then with probability one 
$$
\lim_{n \to \infty} \frac{1}{n}\log \|S_n\|= \gamma_1.
$$
\end{fact}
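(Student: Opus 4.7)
The plan is to invoke Kingman's subadditive ergodic theorem applied to the stationary subadditive array built from the iterates $Y_i$. Concretely, I would set
\[
X_{m,n} := \log \|Y_n Y_{n-1} \cdots Y_{m+1}\|, \qquad 0 \le m < n,
\]
so that $X_{0,n} = \log \|S_n\|$. Submultiplicativity of the spectral norm yields the subadditivity relation $X_{0,n+k} \le X_{0,n} + X_{n,n+k}$ for all $n,k \ge 1$, i.e.\ the family is genuinely subadditive.

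The next step would be to verify the remaining hypotheses of Kingman's theorem. Because the $Y_i$ are i.i.d., the joint distribution of $\{X_{m+\ell,\,n+\ell}\}_{m<n}$ is independent of the shift $\ell$, giving the required stationarity; moreover the Bernoulli shift on the product space $(\Omega,\mu)^{\N}$ is ergodic (in fact mixing), which promotes the a.s.\ limit from a random variable to a deterministic constant. The integrability condition $\E[X_{0,1}^+] = \E[\log^+\|Y_1\|] < \infty$ is exactly the moment assumption in the statement, and it suffices for Kingman's theorem (the limit is permitted to be $-\infty$).

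Having checked these, Kingman's theorem delivers
\[
\frac{1}{n}\log\|S_n\| \;\xrightarrow[n\to\infty]{\text{a.s.}}\; L := \inf_{n\ge 1}\frac{1}{n}\,\E[\log\|S_n\|] \;\in\; \R\cup\{-\infty\}.
\]
Finally, by Fekete's lemma applied to the subadditive sequence $a_n := \E[\log\|S_n\|]$ (whose subadditivity again comes from submultiplicativity of the norm together with independence of the factor blocks), this infimum equals $\lim_{n\to\infty} \tfrac{1}{n}\E[\log\|S_n\|] = \gamma_1$, matching the definition. In the invertible case with the extra hypothesis $\E[\log^+\|Y_1^{-1}\|]<\infty$, one gets the two-sided bound $|\gamma_1| \le \E[\ell(Y_1)] < \infty$, so the limit is actually finite.

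The main conceptual obstacle is the verification of ergodicity of the shift in a form suitable for Kingman's theorem, ensuring the almost sure limit is the deterministic constant $\gamma_1$ rather than a random variable; everything else (subadditivity, stationarity, integrability) is a short calculation from the i.i.d.\ hypothesis and submultiplicativity of the norm. A secondary subtlety is handling the case $\gamma_1 = -\infty$, which is covered by the standard truncation argument ($\log^+$ is used in the moment hypothesis precisely so that the negative part can diverge) within Kingman's theorem.
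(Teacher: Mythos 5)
Your argument is correct and is precisely the standard Furstenberg--Kesten proof via Kingman's subadditive ergodic theorem; the paper itself does not prove this fact but cites it as Theorem 4.1, Chapter I of Bougerol--Lacroix \cite{prm}, where it is established by exactly this route (subadditive array $X_{m,n}=\log\|Y_n\cdots Y_{m+1}\|$, stationarity and ergodicity of the i.i.d.\ shift, integrability from $\E\log^+\|Y_1\|<\infty$, identification of the limit with $\inf_n \tfrac1n\E\log\|S_n\|=\gamma_1$ by Fekete). No gaps.
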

 The above fact, when applied to the  exterior powers of the random matrices considered there, gives the following fact (see Proposition 5.6, Chapter III, \cite{prm}).
 
\begin{fact}\label{fact1}
If $\a_1(n)\geq \a_2(n)\geq \cdots \geq \a_d(n) >0$ are singular values of $S_n$ (with the assumptions of  the previous fact), then with probability one, for $1 \leq p \leq d$
$$
\gamma_p= \lim_{n \to \infty} \frac{1}{n}\E(\log \a_p(n))= \lim_{n \to \infty} \frac{1}{n}\log \a_p(n).
$$
\end{fact}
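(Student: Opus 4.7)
The plan is to reduce everything to the previous fact (the a.s.\ convergence for the top exponent) by passing to exterior powers. First I would verify that the sequence $\wed^p Y_1, \wed^p Y_2, \ldots$ is i.i.d.\ in $GL_{\binom{d}{p}}(\R)$ and satisfies the integrability hypothesis: from the elementary bound $\|\wed^p M\| \leq \|M\|^p$ one gets $\E(\log^+ \|\wed^p Y_1\|) \leq p\,\E(\log^+\|Y_1\|) < \infty$. By Cauchy-Binet, $\wed^p S_n = (\wed^p Y_n) \cdots (\wed^p Y_1)$, so the previous fact applies to this product and gives, almost surely,
$$
\frac{1}{n} \log \|\wed^p S_n\| \longrightarrow \tilde\gamma^{(p)},
$$
where $\tilde\gamma^{(p)} = \lim_{n \to \infty} \frac{1}{n}\E(\log \|\wed^p S_n\|)$ is the first Lyapunov exponent of the law of $\wed^p Y_1$.

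Next I would identify $\tilde\gamma^{(p)}$ with $\sum_{i=1}^p \gamma_i$: this is immediate from the inductive definition of the Lyapunov exponents of $\mu$, which sets precisely $\sum_{i=1}^p \gamma_i = \lim_{n \to \infty} \frac{1}{n}\E(\log \|\wed^p S_n\|)$. Combined with the singular-value identity $\|\wed^p S_n\| = \a_1(n) \cdots \a_p(n)$ derived earlier in the excerpt, this yields, both in expectation and almost surely,
$$
\frac{1}{n} \sum_{i=1}^p \log \a_i(n) \longrightarrow \sum_{i=1}^p \gamma_i.
$$

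Finally I would recover the individual convergence by induction on $p$. The case $p=1$ is the previous fact itself. For $p \geq 2$, subtracting the $(p-1)$-limit from the $p$-limit gives $\frac{1}{n}\log \a_p(n) \to \gamma_p$ whenever the $(p-1)$-partial sum is finite; this covers every case in which $\gamma_p$ is finite as well as the first index at which $\gamma_p = -\infty$. In the remaining degenerate range, where $\sum_{i=1}^{p-1}\gamma_i = -\infty$ (so $\gamma_p = -\infty$ by the definition), I would instead use the monotonicity $\a_p(n) \leq \a_{p-1}(n)$ together with the inductive hypothesis to conclude $\frac{1}{n}\log \a_p(n) \to -\infty$. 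The main obstacle is essentially bookkeeping: the only real content is the identification $\tilde\gamma^{(p)} = \sum_{i=1}^p \gamma_i$, and that is forced by the definition of the $\gamma_i$ once the previous fact is in hand.
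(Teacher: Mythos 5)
Your proof is correct and follows exactly the route the paper indicates: apply the previous fact (a.s.\ convergence for the top exponent) to the exterior-power products $\wed^p S_n = (\wed^p Y_n)\cdots(\wed^p Y_1)$, use $\|\wed^p S_n\| = \a_1(n)\cdots\a_p(n)$, and peel off the individual singular values by induction on $p$. The paper itself does not reproduce these details, deferring to Proposition 5.6, Chapter III of \cite{prm}; your explicit integrability check $\E(\log^+\|\wed^p Y_1\|)\leq p\,\E(\log^+\|Y_1\|)$ and your careful treatment of the degenerate range where $\sum_{i=1}^{p-1}\gamma_i = -\infty$ (falling back on $\a_p(n)\leq\a_{p-1}(n)$) are just the bookkeeping that the cited reference supplies.
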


\begin{remark}\label{remark1} The above fact implies $\gamma_1 \geq \gamma_2 \geq \cdots \geq \gamma_d$.
It can also be seen from the above fact that the first two Lyapunov exponents associated with the measure of $\wedge^p Y_1 $ are $\sum_{i=1}^{p-1} \gamma_i + \gamma_p$ and $ \sum_{i=1}^{p-1} \gamma_i + \gamma_{p+1}$ for any $1 \leq p \leq d$.
\end{remark}

One immediate question would be whether the absolute values  of eigenvalues of $S_n$ also exhibit similar asymptotic behaviour. With a motivation  from this problem, stability components are defined as follows.
\begin{definition}
If $\b_1(n), \b_2(n), \ldots , \b_d(n) $ are eigenvalues of $S_n$ such that $|\b_1(n)|\geq |\b_2(n)|\geq \cdots \geq |\b_d(n)| >0$, then the {\bf stability exponents} ${\delta_1,\delta_2 \ldots \delta_d}$ associated with $\mu$ are defined as
$$
\delta_p := \lim_{n \to \infty} \frac{1}{n} \log |\b_p(n)|
$$
for all $1 \leq p \leq d$, in the cases where the limits exist.
\end{definition}

 Before we proceed to try to  answer  this question (which is positive at least in a few cases when the Lyapunov exponents are distinct), it is required to know the conditions on measure $\mu$ ensuring the distinctness of Lyapunov exponents and also to study the asymptotic behaviour of right and left singular vectors of $S_n$.

\begin{definition} 
Let $\mu$ (resp. $\mu^*$) be the probability measure  of the invertible random matrix $Y_1$ (resp. $Y_1^*$) on $ GL_d(\R)$. Then $\bm {T_{\mu}}$ is defined to be the smallest closed semigroup in $ GL_d(\R)$ which contains the support of $\mu$ in $ GL_d(\R)$. (Support of $\mu$ in $ GL_d(\R)$ is the set of all points  in $ GL_d(\R)$ whose every open-neighbourhood has positive measure).
\end{definition}
 The following definitions are used to describe the sets $T_{\mu}$ for which the Lyapunov exponents are distinct.  
\begin{definition}
Given a subset $T$ of $ GL_d(\R)$, we define the index of $T$ as the least integer $r$ such that there exists a sequence $\{M_n, n\geq 0\}$ in $T$ for which $\|M_n\|^{-1}M_n$ converges to a rank $r$ matrix. We say that $T$ is \textbf {contracting} when its index is one. $T$ is said to be \textbf {$p$- contracting} if $\{\wedge^p M; M \in T\}$ is contracting.
\end{definition}
\begin{definition}Irreducibility,
\begin{itemize}
\item $T$ is \textbf {irreducible}, if there is no proper linear subspace $V$ of $\R^d$ such that $M(V)=V$ for any $M$ in $T$.
\item $T$ is \textbf {strongly irreducible}  if there does not exist a finite family of proper linear subspaces of $\R^d$, $V_1,V_2\ldots V_k$ such that $M(V_1\cup V_2\cup \ldots V_k)=(V_1\cup V_2\cup \ldots V_k)$ for any $M$ in $T$.
\item $T$ is \textbf {$p$-strongly irreducible} if $\{\wedge^p M; M \in T\}$ is strongly irreducible.
\end{itemize}
\end{definition}
Below is a very useful fact (see Lemma 3.3, Chapter III \cite{prm}).
\begin{fact}\label{star}
The index of $T_{\mu}$ is equal to the index of $T_{{\mu}^*}$. $T_{{\mu}^*}$ is strongly irreducible if $T_{\mu}$ is.
\end{fact}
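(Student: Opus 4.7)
The plan is to exploit that $M \mapsto M^*$ is a continuous anti-homomorphism on $GL_d(\R)$, together with the invariance of the spectral norm and of singular values under the adjoint. First I would establish the elementary duality $T_{\mu^*} = \{M^* : M \in T_\mu\}$: if $A_1, \ldots, A_n \in \supp(\mu)$, then each $A_i^* \in \supp(\mu^*)$, and $(A_1 \cdots A_n)^* = A_n^* \cdots A_1^* \in T_{\mu^*}$; continuity of the adjoint upgrades this identity to the closure, while symmetry between $\mu$ and $\mu^*$ yields the reverse inclusion. This is the main tool, after which both halves of the statement become short calculations.

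For the index claim I would use that $\|M\| = \|M^*\|$ and $\operatorname{rank}(N) = \operatorname{rank}(N^*)$. If $\{M_n\} \subseteq T_\mu$ realizes the index, meaning $M_n/\|M_n\|$ converges to a matrix $N$ of rank $r$, then
\[
\frac{M_n^*}{\|M_n^*\|} \;=\; \left(\frac{M_n}{\|M_n\|}\right)^{\!*} \;\longrightarrow\; N^*,
\]
and the limiting sequence lies in $T_{\mu^*}$ by the duality above. Hence the index of $T_{\mu^*}$ is at most that of $T_\mu$, and the reverse inequality follows by exchanging the roles of $\mu$ and $\mu^*$.

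For strong irreducibility I would proceed by contraposition. Assume $T_{\mu^*}$ is not strongly irreducible, and choose a family $W_1, \ldots, W_k$ of proper subspaces of $\R^d$, of minimal cardinality among all families satisfying $M^*(W_1 \cup \cdots \cup W_k) = W_1 \cup \cdots \cup W_k$ for every $M^* \in T_{\mu^*}$. Invertibility of $M^*$ together with the minimality forces $M^*$ to permute $\{W_1, \ldots, W_k\}$, say $M^*(W_i) = W_{\pi(i)}$. Setting $V_i := W_i^\perp$ (still proper since $W_i$ is) and using the standard identity $M(W^\perp) = ((M^*)^{-1}W)^\perp$ valid for invertible $M$, I compute, for every $M \in T_\mu$,
\[
M(V_i) \;=\; ((M^*)^{-1} W_i)^\perp \;=\; W_{\pi^{-1}(i)}^\perp \;=\; V_{\pi^{-1}(i)}.
\]
So $M$ permutes $V_1, \ldots, V_k$, giving $M(V_1 \cup \cdots \cup V_k) = V_1 \cup \cdots \cup V_k$, which contradicts the strong irreducibility of $T_\mu$.

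The main subtlety I anticipate is the minimality reduction: one needs the family $\{W_1,\ldots,W_k\}$ chosen so that no proper sub-family is invariant, so that a dimension/cardinality count upgrades the set-theoretic equality $\bigcup_i M^*(W_i) = \bigcup_j W_j$ to a genuine permutation of the subspaces rather than an absorption of one $W_i$ into another. Once this reduction is carried out, the rest is bookkeeping with the adjoint identity and poses no real difficulty.
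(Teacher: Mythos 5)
The paper does not actually prove this fact; it cites Lemma~3.3, Chapter~III of \cite{prm} without proof, so there is no in-paper argument to compare against. Your proof is correct and is essentially the standard one. The duality $T_{\mu^*}=\{M^*:M\in T_\mu\}$ is right ($T_\mu$ is the closure in $GL_d(\R)$ of finite products from $\supp(\mu)$, and $M\mapsto M^*$ is an anti-homomorphic homeomorphism, so the image of $T_\mu$ is a closed semigroup containing $\supp(\mu^*)$; symmetry gives equality). The index claim then follows immediately from $\|M^*\|=\|M\|$ and $\operatorname{rank}(N^*)=\operatorname{rank}(N)$, and the identity $M(W^\perp)=((M^*)^{-1}W)^\perp$ for invertible $M$ is correct and finishes the strong irreducibility part once you have the permutation. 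The one step you flag as a subtlety, and the one you should actually spell out, is that reduction: after pruning the family so that no $W_i$ is contained in another (which minimal cardinality already guarantees), the relevant tool is not really a dimension or cardinality count but the lemma that a vector space over an infinite field is never a finite union of proper subspaces. Applying it to $M^*(W_i)\subseteq\bigcup_j W_j$ gives $M^*(W_i)\subseteq W_{\sigma(i)}$, applying it to $(M^*)^{-1}(W_j)\subseteq\bigcup_i W_i$ gives $(M^*)^{-1}(W_j)\subseteq W_{\tau(j)}$, and then $W_i\subseteq W_{\tau(\sigma(i))}$ forces $\tau\circ\sigma=\mathrm{id}$ by the no-inclusion property, while $(M^*)^{-1}(W_{\sigma(i)})\subseteq W_i$ upgrades the containment $M^*(W_i)\subseteq W_{\sigma(i)}$ to equality. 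With that written out, your orthogonal-complement computation completes the proof.
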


Below are some more facts regarding the distinctness of Lyapunov exponents, which we would use   to study the asymptotic behaviour of right and left singular vectors of $S_n$. It is natural to assume that $T_\mu$ is irreducible (see exercise 5.3, Chapter I, \cite{prm}). The following fact gives necessary and sufficient conditions for the distinctness of the first two Lyapunov exponents. (See  Theorem 6.1,  Chapter III, \cite{prm} for the following fact).  
\begin{fact}\label{fact3}
Let $\mu$ be a probability measure on $GL_d(\R)$ such that $\int \log ^+ \|M\| d{\mu}(M)$ is finite. We suppose $T_\mu$ is irreducible. Then $\gamma_1 > \gamma_2$ if and only if $T_\mu$ is strongly irreducible and contracting. 
\end{fact}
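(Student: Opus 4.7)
The plan is to handle the two implications separately, with the sufficient direction being the more substantive one.

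For the implication $T_\mu$ strongly irreducible and contracting $\Rightarrow \gamma_1 > \gamma_2$, the strategy is to lift the analysis to the projective space $P(\R^d)$ and exploit Furstenberg's integral formula. First I would show existence of a $\mu$-stationary probability measure $\nu$ on $P(\R^d)$ via Krylov--Bogolyubov applied to the induced Markov chain $\bar v \mapsto \overline{Y_1 v}$. Then I would use contraction, which forces the existence of a sequence in $T_\mu$ with rank-$1$ limits of $\|M_n\|^{-1}M_n$, together with strong irreducibility, which rules out $T_\mu$-invariant finite unions of proper subspaces, to show that $\nu$ is the \emph{unique} stationary measure and that $\overline{S_n v}$ converges in distribution to $\nu$ for every starting direction. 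Furstenberg's formula then gives
$$
\gamma_1 \;=\; \int_{GL_d(\R)} \int_{P(\R^d)} \log \frac{\|Mv\|}{\|v\|}\; d\nu(\bar v)\, d\mu(M).
$$

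To pass from existence of $\nu$ to the strict gap $\gamma_1 > \gamma_2$, I would analyze the angle $\theta_n$ between $S_n v$ and $S_n u$ for generic independent starting directions $\bar v,\bar u$. Using the identity
$$
\sin\theta_n \;=\; \frac{\|S_n v \wedge S_n u\|}{\|S_n v\|\, \|S_n u\|}
$$
and Fact \ref{fact1} applied to both $S_n$ and $\wedge^2 S_n$ (whose top Lyapunov exponent is $\gamma_1+\gamma_2$ by Remark \ref{remark1}), one obtains $\frac{1}{n}\log \sin\theta_n \to \gamma_2 - \gamma_1$ almost surely. Contraction plus strong irreducibility drag both $\overline{S_n v}$ and $\overline{S_n u}$ to the \emph{same} random limit direction, so $\sin\theta_n \to 0$; upgrading this to exponential decay (using that strong irreducibility precludes long stretches on which two generic directions lie almost in a common low-dimensional invariant configuration) yields $\gamma_2 - \gamma_1 < 0$.

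For the converse $\gamma_1 > \gamma_2 \Rightarrow T_\mu$ strongly irreducible and contracting, I would invoke the Oseledets multiplicative ergodic theorem to obtain, almost surely, a random one-dimensional top subspace $E_1(\omega)\subset \R^d$ and a complementary $(d-1)$-dimensional subspace $F(\omega)$ on which vectors grow at rate at most $\gamma_2 < \gamma_1$. Taking any vector $v\notin F(\omega)$ and normalizing shows that $\|S_n\|^{-1}S_n$ converges (along a subsequence realized by $\omega$) to a rank-$1$ matrix, and since such $\omega$ form a set of positive measure, $T_\mu$ must be contracting. Strong irreducibility follows by contradiction: were $T_\mu$ to preserve a finite family $V_1,\dots,V_k$ of proper subspaces, then the distribution of the Oseledets line $E_1(\omega)$ would be supported on $V_1\cup\cdots\cup V_k$, yielding a $\mu$-stationary probability measure on $P(\R^d)$ concentrated on a finite union of proper projective subspaces, which is incompatible with $T_\mu$ being irreducible and having $\gamma_1 > \gamma_2$.

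The main obstacle is the conversion of the qualitative contraction $\sin\theta_n \to 0$ into the quantitative exponential rate that yields $\gamma_2 < \gamma_1$; this is precisely where strong irreducibility is essential, since mere irreducibility is compatible with a stationary measure on $P(\R^d)$ that is carried by a finite $T_\mu$-invariant union of subspaces, and such a measure would block Furstenberg-type arguments from producing the gap.
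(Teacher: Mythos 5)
This is a quoted result: the paper cites it from Bougerol--Lacroix (Theorem 6.1, Chapter III of \cite{prm}) and gives no proof of its own, so there is no in-paper argument to compare with. Judged on its own terms, your converse direction (Oseledets gives a rank-one limit of $\|S_n\|^{-1}S_n$, hence contraction; a preserved finite union of proper subspaces would clash with a one-dimensional top Oseledets space and irreducibility) is the right outline, though the strong-irreducibility half needs more care: preservation of $V_1\cup\cdots\cup V_k$ by $T_\mu$ does not by itself place the random top line inside $\bigcup V_i$, and some argument (e.g.\ passing to the finite-index sub-semigroup fixing each $V_i$ and comparing exponents) is required.

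The serious gap is in the forward implication, at exactly the step you yourself flag as the ``main obstacle.'' You correctly set up $\frac{1}{n}\log\sin\theta_n \to \gamma_2-\gamma_1$ (and even this needs more than Fact \ref{fact1}: one must ensure $v\wedge u$ is generic for $\wedge^2 S_n$, which does not follow automatically from $v,u$ being sampled from the stationary measure $\nu$ without further input), and you correctly obtain $\sin\theta_n\to 0$ from contraction. But that only yields $\gamma_2-\gamma_1\le 0$; a sequence can tend to zero subexponentially, which is precisely the degenerate case $\gamma_1=\gamma_2$ you are trying to exclude. Asserting ``exponential decay'' and then reading off $\gamma_2-\gamma_1<0$ is circular, since the decay rate \emph{is} $\gamma_2-\gamma_1$. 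The parenthetical ``strong irreducibility precludes long stretches\ldots'' is not an argument --- it is the entire content of the Furstenberg--Guivarc'h gap theorem. A genuine quantitative mechanism is needed (a spectral-gap estimate for the Markov operator on $P(\R^d)$, or Furstenberg's argument that properness and non-degeneracy of the unique stationary measure force strict positivity of a certain integral), and none is supplied. As written, the proposal establishes $\gamma_1\geq\gamma_2$, which is already known, but not the strict inequality.
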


\begin{remark}\label{remark2}
If $\mu$ is a probability measure on $GL_d(\R)$ such that $\int \log ^+ \|M\| d{\mu}(M)$ is finite and   $T_\mu$ is $p$-strongly irreducible and $p$-contracting for some $p \in \{1,\ldots, d-1\}$, then by applying the above fact to the  $p$-th exterior power $\wedge^p M$ of  random matrix $M$ with distribution $\mu$, it follows from Remark \ref{remark1} that  $\gamma_1+\cdots + \gamma_{p-1} + \gamma_p > \gamma_1+\cdots + \gamma_{p-1} + \gamma_{p+1} $. Therefore $\gamma_p > \gamma_{p+1}$.
\end{remark}

\begin{fact}\label{fact2}
Let $Y_1,Y_2,\ldots$ be $i.i.d$ random elements of $GL_d(\R)$ with common distribution $\mu$ and $S_n=Y_n \cdots Y_1$. Consider a polar decomposition $S_n=U_n \Sigma_n V_n^*$ with $U_n$ and $V_n$ in $O(d)$ and $\Sigma_n= \mbox{diag}(\a_1(n),\dots, \a_{d}(n))$ with $\a_1(n)\geq \a_2(n)\geq \cdots \geq \a_d(n) >0$. If $T_{\mu}$ is a strongly irreducible semigroup with index $p$, then
\begin{itemize}
\item The subspace spanned by $\{V_n e_1,\ldots, V_n e_p\}$
 converges almost surely to a  $p$-dimensional random subspace $\mathbb V$.

\item  For any vector $u$ in $\R^d$, $\P(u \hspace{2pt} \mbox{is orthogonal to}  \hspace{2pt} \mathbb V)=0$. 

\item With probability one,
$$
\lim_{n \to \infty} \frac{\a_{p+1}(n)}{\|S_n\|}=0.
$$
\end{itemize}
\end{fact}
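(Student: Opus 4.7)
The overall plan is to reduce all three items to the well-understood rank-one case by passing to the $p$-th exterior power $\wedge^p S_n$, and then to use strong irreducibility to preclude degeneracy of the limiting subspace $\mathbb V$. First, I would observe that the index-$p$ hypothesis makes the semigroup $\{\wedge^p M : M\in T_\mu\}$ contracting: if $M_n\in T_\mu$ satisfies $\|M_n\|^{-1}M_n\to P$ with $\mbox{rank}(P)=p$, then $\wedge^p M_n/\|\wedge^p M_n\|$ converges to a nonzero scalar multiple of $\wedge^p P$, which is a rank-one element of $\mbox{End}(\wedge^p \R^d)$. Together with $p$-strong irreducibility of $T_\mu$ (which I would justify separately from strong irreducibility of $T_\mu$ and the index-$p$ hypothesis), Fact \ref{fact3} and Remark \ref{remark2} immediately yield the strict gap $\gamma_p>\gamma_{p+1}$. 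Item (c) then follows from Fact \ref{fact1}, since almost surely $\frac{1}{n}\log(\a_{p+1}(n)/\|S_n\|)\to \gamma_{p+1}-\gamma_1\leq\gamma_{p+1}-\gamma_p<0$, giving exponential decay.

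For item (a), I would exploit the Pl\"{u}cker embedding $Gr(p,\R^d)\hookrightarrow \mathbb P(\wedge^p \R^d)$, under which the subspace $\mbox{span}\{V_n e_1,\ldots,V_n e_p\}$ is identified with the top right singular direction of the forward i.i.d.\ product $\wedge^p S_n=\wedge^p Y_n\cdots\wedge^p Y_1$. The top two Lyapunov exponents of this product differ by exactly $\gamma_p-\gamma_{p+1}>0$, and its defining semigroup is contracting and strongly irreducible, so a classical Furstenberg--Oseledec argument delivers almost-sure convergence of the top right singular direction in $\mathbb P(\wedge^p \R^d)$. Concretely, I would write $\wedge^p S_{n+1}/\|\wedge^p S_{n+1}\|$ in the singular basis of $\wedge^p S_n$, bound the off-diagonal coefficients by a constant multiple of $\a_{p+1}(n)/\a_p(n)$, and use the resulting exponential summability to conclude that the sequence of top directions is almost surely Cauchy in $Gr(p,\R^d)$.

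For item (b), let $\nu$ denote the law of $\mathbb V$ on $Gr(p,\R^d)$. By construction $\mathbb V\eqd X_1^*\mathbb V'$, where $\mathbb V'$ is an independent copy of $\mathbb V$ and $X_1\sim\mu$, so $\nu$ is stationary under the pushforward action of $\mu^*$. If $\P(u\perp\mathbb V)>0$ for some $u\in\R^d$, then the proper closed subvariety $\{V\in Gr(p,\R^d):u\perp V\}$ carries positive $\nu$-mass; iterating stationarity and invoking Fact \ref{star} to transfer strong irreducibility to $T_{\mu^*}$, one can extract a finite family of proper subspaces of $\R^d$ preserved by $T_{\mu^*}$, contradicting its strong irreducibility. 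I expect this to be the main obstacle: transferring strong irreducibility of the action on $\R^d$ to non-degeneracy of a stationary measure on $Gr(p,\R^d)$ is subtle and requires a careful Furstenberg-type analysis of the support of $\nu$.
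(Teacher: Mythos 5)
The paper does not actually prove this statement: it is quoted verbatim from Bougerol--Lacroix (Theorem 3.1 and Proposition 3.2, Chapter III of \cite{prm}), so there is no internal proof to compare against. Your attempt is therefore judged on its own terms, and it takes a genuinely different route from the source: Bougerol--Lacroix argue by compactness on the space of probability measures on $\P(\R^d)$, showing that the random measures $S_n.\nu$ converge almost surely (weak-*) to a Dirac mass, which requires no Lyapunov exponents whatsoever; you instead route everything through the exterior power $\wedge^p S_n$ and the gap $\gamma_p>\gamma_{p+1}$.

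That choice introduces two genuine gaps. First, the statement of Fact~\ref{fact2} carries no moment hypothesis and no invertibility assumption, yet your use of Fact~\ref{fact1}, Fact~\ref{fact3} and Remark~\ref{remark2} all presuppose $\E\log^+\|Y_1\|<\infty$ and $Y_1\in GL_d(\R)$; the exponential summability underlying your Cauchy argument in (a) and the limit computation in (c) also presuppose a finite, positive Lyapunov gap. Second, and more fundamentally, to apply Fact~\ref{fact3} to $\wedge^p Y_1$ you need $T_\mu$ to be $p$-strongly irreducible, but you only assert that this ``would be justified separately'' from strong irreducibility of $T_\mu$ plus the index-$p$ hypothesis. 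That implication is not given, and it is not automatic: note that the paper's own Theorem~\ref{theorem2} lists $p$-strong irreducibility and $p$-contracting as additional standing hypotheses rather than deriving them from strong irreducibility plus an index condition. (Index $p$ does give $p$-contracting, as you observe, but $p$-strong irreducibility does not follow for free.) Without this, the gap $\gamma_p>\gamma_{p+1}$ is not established and items (a) and (c) collapse. You correctly flag item (b) as the delicate point; the needed input (that any stationary measure on $Gr(p,\R^d)$ for a strongly irreducible action assigns zero mass to each incidence set $\{V:u\perp V\}$) is precisely what Proposition 3.2 of \cite{prm} proves by a maximal-mass contradiction argument, not by bare stationarity as your sketch suggests.
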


See Theorem 3.1, Proposition 3.2, Chapter III, \cite{prm} for the above fact.  From the above facts \ref{fact1}, \ref{fact3} and \ref{fact2}, we can see that if $\int \log ^+ \|M\| d{\mu}(M)$ is finite and  $T_\mu$ is strongly irreducible and contracting, then  
\begin{itemize}
\item ${V_n e_1}$
 converges almost surely to a   random vector $\bm v$.

\item  For any vector $u$ in $\R^d$, $\P(u \hspace{2pt} \mbox{is orthogonal to}  \hspace{2pt} \bm v)=0$. 

\item With probability one,
$$
\lim_{n \to \infty} \l(\frac{\a_{2}(n)}{\a_{1}(n)}\r)^{\frac{1}{n}}=e^{\gamma_2-\gamma_1}<1.
$$
\end{itemize}

Now we shall see that $V_n e_1$ converges exponentially fast to $\bm v$. We would also see that the direction of $V_n e_1$ converges in distribution to that of $\bm v$ exponentially fast. The proof for exponential  convergence  in distribution of $V_n e_1$ follows from that of $U_n e_1$, which we would prove by showing that the directions of $U_n e_1$ and $S_n e_1$ converge to each other exponentially fast. Exponential convergence in distribution  of the direction of $S_n e_1$   is already known. We will recall few definitions from the literature, for the sake of completeness.
 
 We say that two  non-zero vectors $x,y$ in $\R^d$ have the same direction if for some $\lambda \in \R$, $x=\lambda y$. This defines an equivalence relation $\Gamma$ on $\R^d- \{0\}$.
\begin{definition}
The set of directions in $\R^d$ is the projective space  $\P(\R^d)$ defined as the quotient space $\R^d-\{0\}/\Gamma$. For $u \in \R^d- \{0\}$, $\bar{u}$ denotes its direction (i.e. its class in $\P(\R^d)$). For $M$ in $GL_d(\R)$, we set $\bm{M.\bar{u}=\bar{Mu}}$ $i.e.$ $M.\bar{u}$ denotes a unit vector along the direction of $Mu$.
\end{definition}
 If $u,v$ are two unit vectors in $\R^d$, we define 
 $$
\delta(\bar{u},\bar{v})=\sqrt{1-|\langle u,v \rangle |^2},
 $$
where $\langle, \rangle$ denotes the usual Euclidean inner product on $\R^d$. $\delta(\bar{u},\bar{v})$ defines a metric on $\P(\R^d)$. We can identify $\bar{u}$ with the unit vector in the class  $\bar{u}$, whose first non-zero coordinate is positive.
\begin{definition}
For a probability measure $\mu$ on $GL_d(\R)$, a probability measure $\nu$ on  $\P(\R^d)$ is said to be $\bm{\mu}$-$\bm{\mbox{invariant}}$ if the distribution of  $M.\bm{\bar u}$ is  also  $\nu$  when $M$ is independent  of $\bm{\bar {u}}$ and the distributions of $M$ and $ \bm {\bar u}$ are $\mu$ and $\nu$ respectively.
\end{definition}
The first Lyapunov exponent associated with $\mu$ can be computed with the help  of any  $\mu$-invariant distribution on $\P(\R^d)$. 
\begin{fact}If $T_\mu$ is strongly irreducible, then the first Lyapunov exponent $\gamma_1$ associated with $\mu$ is given by
$$
\gamma_1=\iint \log{\frac{\|Mx\|}{\|x\|}} d\mu(M)d\nu(\bar x),
$$
where $\nu$ is any $\mu$-invariant distribution on $\P(\R^d)$.
\end{fact}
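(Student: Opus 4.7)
The plan is to view $n^{-1}\log\|S_n x\|$ as a Birkhoff time average for a measure-preserving skew product, apply the ergodic theorem, and identify the common limit with $\gamma_1$. Let $\bar X_0\sim\nu$ be independent of the i.i.d.\ sequence $(Y_n)$ of law $\mu$, and set $\bar X_n=Y_n\cdot\bar X_{n-1}$. The hypothesis that $\nu$ is $\mu$-invariant is exactly the statement that the skew product $T:(\omega,\bar x)\mapsto(\sigma\omega,Y_1(\omega)\cdot\bar x)$ on $\Omega\times\P(\R^d)$ preserves $\P\otimes\nu$, where $\sigma$ denotes the shift on $(Y_k)$. Taking unit-length representatives $x_k$ of $\bar X_k$, one obtains the cocycle telescoping
$$
\log\|S_n x_0\|=\sum_{k=1}^n\log\frac{\|Y_k x_{k-1}\|}{\|x_{k-1}\|}=\sum_{k=0}^{n-1}g\circ T^k(\omega,\bar X_0),
$$
where $g(\omega,\bar x):=\log(\|Y_1(\omega)x\|/\|x\|)$ is a well-defined function on $\Omega\times\P(\R^d)$ whose positive part is bounded by $\log^+\|Y_1\|\in L^1(\P\otimes\nu)$.

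Assume $\gamma_1>-\infty$ (the formula is vacuous otherwise), so that $g\in L^1$. Birkhoff's ergodic theorem then yields
$$
\frac{1}{n}\log\|S_n X_0\|\longrightarrow \EE{g\mid\mathcal I}\quad\P\otimes\nu\text{-a.s.,}
$$
where $\mathcal I$ is the $T$-invariant $\sigma$-algebra, and the mean of $\EE{g\mid\mathcal I}$ coincides with the double integral $\iint\log(\|Mx\|/\|x\|)\,d\mu(M)\,d\nu(\bar x)$. Strong irreducibility of $T_\mu$ rules out any non-trivial finite family of $T$-invariant sets of directions and thereby forces ergodicity of the skew product, so this conditional expectation equals the double integral almost surely. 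On the other hand, using the polar decomposition $S_n=U_n\Sigma_n V_n^*$ one has $\|S_n x\|\ge \|S_n\|\cdot|\langle V_n e_1,x\rangle|$ for every unit $x$; Fact \ref{fact1} supplies $n^{-1}\log\|S_n\|\to\gamma_1$ a.s., while Fact \ref{fact2} (applied in the strongly irreducible case) guarantees that the projection of $x$ onto the limiting subspace $\mathbb V$ is a.s.\ nonzero for $\nu$-a.e.\ $\bar x$, which together with $\sigma_{p+1}(n)/\|S_n\|\to 0$ forces $n^{-1}\log\|S_n X_0\|\to\gamma_1$ almost surely. Matching the two descriptions of the Birkhoff limit proves the formula.

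The main obstacle is the ergodicity of $T$ on $(\Omega\times\P(\R^d),\P\otimes\nu)$: mere irreducibility is insufficient, and one genuinely needs strong irreducibility to preclude periodic orbits through finite families of proper subspaces. A parallel technical point is verifying that $g\in L^1$; a priori $\log\|Mx\|$ can have heavy negative tails even when $\log\|M\|$ is well behaved. Under the additional moment assumption $\E\log^+\|Y_1^{-1}\|<\infty$ one has the clean bound $g\ge -\log\|Y_1^{-1}\|\in L^1$; without it, the negative part of $g$ must be controlled by sandwiching $\EE{\log\|S_n X_0\|}$ between the immediate upper bound $\EE{\log\|S_n\|}=n\gamma_1+o(n)$ and a matching lower bound extracted from the spectral gap furnished by strong irreducibility, which is where most of the genuine work of the proof lies.
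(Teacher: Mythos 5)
The paper does not supply a proof of this Fact; it simply cites Corollary~3.4, Chapter~III of \cite{prm}, so there is no in-paper argument to compare against. Assessed on its own terms, your skew-product/Birkhoff strategy is exactly the classical route to the Furstenberg formula and the telescoping identity $\log\|S_nx_0\|=\sum_{k=0}^{n-1}g\circ T^k(\omega,\bar X_0)$ is set up correctly, but there are two genuine gaps in the way you identify the Birkhoff limit.

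First, the claim that strong irreducibility of $T_\mu$ ``forces ergodicity of the skew product'' for an arbitrary $\mu$-invariant $\nu$ is unjustified and, in fact, false in general: when $T_\mu$ is strongly irreducible but not contracting there may be several $\mu$-invariant measures on $\P(\R^d)$, and any non-trivial convex combination of ergodic ones gives a $\mu$-invariant $\nu$ for which $(T,\P\otimes\nu)$ is manifestly non-ergodic. Uniqueness (Fact~7 of the paper) needs contraction, and strong irreducibility alone does not rule out finite families of \emph{skew-product}-invariant sets, only $T_\mu$-invariant families of subspaces. Fortunately this claim is also unnecessary, because your second identification of the limit, if completed, already yields $\E[g\mid\mathcal I]=\gamma_1$ a.s., and integrating gives $\iint g=\gamma_1$ without ergodicity.

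Second, and more seriously, the lower bound $\|S_nx\|\ge\|S_n\|\cdot|\langle V_ne_1,x\rangle|$ only controls $n^{-1}\log\|S_nx\|$ from below when the index $p$ of $T_\mu$ is $1$, since only then does $V_ne_1$ converge and $|\langle V_ne_1,x\rangle|$ stay bounded away from zero. For index $p>1$, $V_ne_1$ need not converge, and the quantity $|\langle V_ne_1,x\rangle|$ can oscillate to zero. What Fact~\ref{fact2} actually delivers is convergence of the $p$-dimensional subspace $\mathbb V_n:=\operatorname{span}\{V_ne_1,\dots,V_ne_p\}$ and non-degeneracy of the projection onto its limit, which yields $\|S_nx\|^2=\sum_i\a_i(n)^2|\langle V_ne_i,x\rangle|^2\ge\a_p(n)^2\|P_{\mathbb V_n}x\|^2$ and hence $\liminf n^{-1}\log\|S_nx\|\ge\gamma_p$; to conclude you additionally need $\gamma_1=\cdots=\gamma_p$ for a strongly irreducible semigroup of index $p$, a fact you neither state nor cite. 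Finally, your handling of integrability of $g$ is only sketched; the cleanest patch is to note that $g^+\le\log^+\|Y_1\|\in L^1(\P\otimes\nu)$, invoke the one-sided Birkhoff theorem (allowing $\E g=-\infty$), and then deduce $\E g=\gamma_1>-\infty$ a posteriori from the a.s.\ limit once that limit has been correctly established.
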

See Corollary 3.4, Chapter III, \cite{prm} for the above fact. 
\begin{fact} There exists a unique $\mu$-invariant distribution on $\P(\R^d)$ if $T_\mu$ is strongly irreducible and contracting.
\end{fact}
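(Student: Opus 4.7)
For existence I would use a standard fixed-point argument. The operator $P_\mu$ acting on probability measures on $\P(\R^d)$ by
$$
(P_\mu \nu)(f) = \iint f(M.\bar u)\, d\mu(M)\, d\nu(\bar u)
$$
is affine and continuous in the weak-$*$ topology, and its fixed points are exactly the $\mu$-invariant measures. Since $\P(\R^d)$ is compact, its space of probability measures is weak-$*$ compact and convex, so Markov--Kakutani (or, concretely, a weak-$*$ accumulation point of the averages $\frac{1}{n}\sum_{k=0}^{n-1} P_\mu^k \nu_0$ for an arbitrary initial $\nu_0$) produces a fixed point.

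For uniqueness, I would exploit the asymptotically contracting dynamics of $S_n$ on $\P(\R^d)$. Let $\nu_1, \nu_2$ be two $\mu$-invariant measures, let $Y_1, Y_2, \ldots$ be i.i.d.\ with distribution $\mu$, and let $\bar u_1, \bar u_2$ be random directions with laws $\nu_1, \nu_2$ respectively, independent of $(Y_n)$. Write $S_n = Y_n \cdots Y_1$. Invariance gives $S_n.\bar u_i \sim \nu_i$ for every $n$, so it suffices to show $\delta(S_n.\bar u_1,\, S_n.\bar u_2) \to 0$ a.s. Since $T_\mu$ is contracting, its index is $p = 1$, so Fact~\ref{fact2} gives $\overline{V_n e_1} \to \bm v$ a.s.\ and $\a_2(n)/\a_1(n) \to 0$ a.s., where $S_n = U_n \Sigma_n V_n^*$ is the polar decomposition. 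Combining the second bullet of Fact~\ref{fact2} with Fubini and the independence of $\bar u_i$ from $(Y_n)$, one obtains $|\langle V_n e_1, u_i\rangle|$ bounded away from zero for large $n$, a.s.\ on the joint probability space. The expansion $S_n u_i = \a_1(n)\,\langle V_n e_1, u_i\rangle\, U_n e_1 + R_n u_i$ with $\|R_n\| \leq \a_2(n)$ then yields
$$
\delta\bigl(S_n.\bar u_i,\, \overline{U_n e_1}\bigr) \longrightarrow 0 \quad \text{a.s.} \qquad (i=1,2),
$$
so $\delta(S_n.\bar u_1,\, S_n.\bar u_2) \to 0$ a.s.\ by the triangle inequality, and bounded convergence against any $f \in C(\P(\R^d))$ delivers $\int f\, d\nu_1 = \int f\, d\nu_2$, hence $\nu_1 = \nu_2$.

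The main technical point I expect to be delicate is setting up the joint almost-sure lower bound on $|\langle V_n e_1, u_i\rangle|$, which must hold on the full product space of $(Y_n)$ and $\bar u_i$ and not only marginally for each fixed $u$. The Fubini argument handles this, but has to be arranged so that the subsequent dominated-convergence step can take place inside a single expectation. Once this is in place, the remaining ingredients---existence of the a.s.\ limit direction $\bm v$, the singular-value gap $\a_2(n)/\a_1(n) \to 0$, and the transversality $\P(u \perp \bm v) = 0$ for each fixed $u$---come directly from Fact~\ref{fact2} (with strong irreducibility entering there as a hypothesis, and Fact~\ref{star} available if one needed to pass between $\mu$ and $\mu^*$), and the argument closes.
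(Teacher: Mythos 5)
The paper offers no proof of this statement---it is quoted as a \emph{Fact} with the citation to Theorem~3.1, Chapter~III of Bougerol--Lacroix, so there is no internal argument to compare against. Your sketch is correct, and it is essentially the standard proof from that reference. The existence half (Markov--Kakutani, or weak-$*$ accumulation of Ces\`aro averages) needs only compactness of $\P(\R^d)$ and is routine. For uniqueness, the decomposition $S_n u_i = \a_1(n)\langle V_n e_1,u_i\rangle\,U_n e_1 + R_n u_i$, where $R_n = S_n - \a_1(n)U_ne_1(V_ne_1)^*$ has norm $\a_2(n)$ and range orthogonal to $U_ne_1$, gives
$$
\|S_n u_i\|^2 = \a_1(n)^2|\langle V_ne_1,u_i\rangle|^2 + \|R_n u_i\|^2
\quad\Longrightarrow\quad
\delta\bigl(S_n.\bar u_i,\overline{U_ne_1}\bigr) \le \frac{\a_2(n)}{\a_1(n)\,|\langle V_ne_1,u_i\rangle|};
$$
spelling out the orthogonality makes the estimate transparent. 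You correctly isolate the one delicate step: $\bm v$ is $(Y_n)$-measurable and $\bar u_i$ is independent of $(Y_n)$, so Fubini upgrades the marginal transversality $\P(u\perp\bm v)=0$ of Fact~\ref{fact2} to the joint a.s.\ statement $\langle\bm v,u_i\rangle\neq 0$, and since $\overline{V_ne_1}\to\bm v$ a.s., the denominator is eventually bounded below. Then $\delta(S_n.\bar u_1,S_n.\bar u_2)\to 0$ a.s., and bounded convergence against the stationarity identities $\E\,f(S_n.\bar u_i)=\int f\,d\nu_i$ forces $\nu_1=\nu_2$. As a side note, the paper's Lemma~\ref{lemma} quantifies exactly this contraction with exponential rates via Fact~\ref{factkey}, which requires the extra exponential-moment hypothesis; your qualitative uniqueness argument does not need it, which is consistent with how the Fact is stated.
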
  
See Theorem 3.1, Chapter III, \cite{prm} for the above fact. The following fact about  convergence of the columns of $S_n$ and the unique $\mu$-invariant distribution is the key tool used in this paper. (See Proposition 2.3, Chapter V and Theorem 2.1, Chapter VI, \cite{prm}). Recall that ${\ell(M)}= \max(\log^+\|M\|, \log^+\|M^{-1}\|).$
\begin{fact}\label{factkey}
Let $Y_1,Y_2,\ldots$ be $i.i.d$ random elements of $GL_d(\R)$ with common distribution $\mu$  such that $T_{\mu}$ is strongly irreducible and contracting. Set $S_n=Y_n\cdots Y_1$ for all $n \geq 1$. If for some $\tau >0$, $\E e^{\tau \ell(Y_1)}$ is finite, then
\begin{itemize}
\item there exists $\alpha_0 >0$  such that for each $\alpha$ in $(0,\alpha_0]$,
$$
A_{\mu, \alpha}:= \lim_{n \to \infty} {\l[ \sup_{\bar{u} \neq \bar{v}} \E\l[\l(\frac{\delta (S_n.\bar{u}, S_n.\bar{v})}{\delta (\bar{u}, \bar{v})} \r)^{\alpha} \r] \r]}^{\frac{1}{n}}< 1,
$$
\item   
there exists $\beta >0$ such that the unique $\mu$-invariant distribution $\nu$ on $\P(\R^d)$ satisfies 
$$
B_{\mu}:=\sup_{\|y\|=1} \int \l[ \frac{\|x\|}{|\langle x,y \rangle|}\r]^{\beta} d\nu(\bar{x}) < \infty.
$$
\end{itemize}
\end{fact}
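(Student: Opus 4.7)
The plan is to follow the Le Page / Guivarc'h--Raugi strategy underlying \cite{prm}: combine submultiplicativity of an expected ratio functional with the geometric effect of strong irreducibility and the contracting hypothesis, using the exponential moment assumption to convert almost sure contraction of the projective action into an exponential moment bound. For the first bullet, write the cocycle
\[
r(M, \bar u, \bar v) := \frac{\delta(M.\bar u, M.\bar v)}{\delta(\bar u, \bar v)} = \frac{\|(\wedge^2 M)(u\wedge v)\|}{\|Mu\|\,\|Mv\|\,\|u\wedge v\|},
\]
which satisfies $r(M_2M_1,\bar u,\bar v)=r(M_2,M_1.\bar u,M_1.\bar v)\,r(M_1,\bar u,\bar v)$. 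Raising to the $\alpha$-th power, taking expectation, using independence of the $Y_j$'s, and then the supremum over $(\bar u,\bar v)$ yields submultiplicativity $c_{m+n}(\alpha)\leq c_m(\alpha)\,c_n(\alpha)$ for the sequence $c_n(\alpha):=\sup_{\bar u\neq\bar v}\E\!\left[r(S_n,\bar u,\bar v)^\alpha\right]$. The pointwise estimate $r(M,\cdot,\cdot)\leq e^{2\ell(M)}$, combined with $\E e^{\tau\ell(Y_1)}<\infty$, makes $c_1(\alpha)$ finite for $\alpha\in(0,\tau/2]$, so Fekete's lemma produces the limit $A_{\mu,\alpha}\in[0,\infty)$.

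The main obstacle is showing $A_{\mu,\alpha}<1$ for some small $\alpha>0$. I would work with the transfer operator $P_\alpha f(\bar u,\bar v):=\E[r(Y_1,\bar u,\bar v)^\alpha f(Y_1.\bar u,Y_1.\bar v)]$ acting on a H\"older space of functions on $\P(\R^d)\times\P(\R^d)$; one has $c_n(\alpha)=\|P_\alpha^n \mathbf 1\|_\infty$. Strong irreducibility plus the contracting hypothesis gives $\gamma_1>\gamma_2$ by Fact~\ref{fact3}, and the pointwise decomposition
\[
r(S_n,\bar u,\bar v)\leq \frac{\sigma_2(S_n)}{\sigma_1(S_n)}\cdot\frac{1}{|\langle u,V_ne_1\rangle|\,|\langle v,V_ne_1\rangle|}
\]
shows that the first factor contracts like $e^{n(\gamma_2-\gamma_1)}$ almost surely by Fact~\ref{fact1}. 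Packaging this via an Ionescu-Tulcea--Marinescu style argument (quasi-compactness of $P_\alpha$ on H\"older functions for $\alpha$ small, with strong irreducibility ruling out peripheral eigenvalues other than the constant mode) yields $\|P_\alpha^n\|_{\mathrm{op}}\leq C\rho^n$ with some $\rho<1$, hence $c_n(\alpha)\leq C\rho^n$ and $A_{\mu,\alpha}\leq\rho<1$. The delicate step is calibrating $\alpha$ small enough that $P_\alpha$ remains a bounded perturbation of the $\alpha=0$ Markov kernel in the chosen H\"older norm, and that the strict spectral gap survives perturbation.

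For the second bullet, fix unit $y$ and set $f_{y,\beta}(\bar x):=(\|x\|/|\langle x,y\rangle|)^\beta$. Invariance of $\nu$ together with a change of variables gives, for every $n$,
\[
\int f_{y,\beta}\,d\nu=\int \E\!\left[\Bigl(\tfrac{\|S_n x\|}{|\langle x,S_n^* y\rangle|}\Bigr)^\beta\right]d\nu(\bar x),
\]
which transfers the singular factor onto the dual walk. By Fact~\ref{star}, $T_{\mu^*}$ is also strongly irreducible and contracting, so part (i) applies to $S_n^*$: the direction $\overline{S_n^* y}$ concentrates exponentially fast on the dual invariant direction $\bar v_*$. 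For $\nu$-typical $\bar x$ one has $|\langle x,v_*\rangle|>0$, so $|\langle x,S_n^* y\rangle|\gtrsim e^{n\gamma_1}$ with high probability, matching the $\|S_n x\|\lesssim e^{n\gamma_1}$ growth in the numerator. Choosing $\beta$ small enough that $\beta\gamma_1$ is strictly dominated by the exponential rates produced in part (i), and using $\E e^{\tau\ell(Y_1)}<\infty$ to control the atypical event that $S_n^*.\bar y$ has not yet concentrated, produces a bound on the right-hand side uniform in $y$, proving $B_\mu<\infty$. Throughout, the simultaneous calibration of $\alpha$ and $\beta$ against $\tau$ is what forces one to take these exponents small, and is the main technical subtlety.
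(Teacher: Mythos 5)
Note first that the paper does not prove this Fact at all: it is quoted verbatim from Bougerol--Lacroix \cite{prm} (Proposition~2.3 of Chapter~V for the first bullet, Theorem~2.1 of Chapter~VI for the second), so there is no in-paper proof to compare against. Your sketch must therefore be judged on its own.

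For the first bullet, your setup is sound: the identity $\delta(M.\bar u, M.\bar v)/\delta(\bar u,\bar v)=\|(\wedge^2 M)(u\wedge v)\|/(\|Mu\|\,\|Mv\|\,\|u\wedge v\|)$, the cocycle relation, the resulting submultiplicativity of $c_n(\alpha)$, and the pointwise bound by $e^{2\ell(M)}$ (hence finiteness of $c_1(\alpha)$ for $\alpha\le\tau/2$) are all correct, and Fekete gives existence of the limit. But the passage from ``$A_{\mu,\alpha}$ exists'' to ``$A_{\mu,\alpha}<1$'' is the entire content of the theorem, and you replace it with a pointer to a quasi-compactness argument for the transfer operator $P_\alpha$ on a H\"older space. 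That route is a genuinely different one from Le Page's original argument reproduced in \cite{prm} (which is a direct probabilistic argument: one uses simplicity of the top exponent to get a.s.\ exponential decay of the ratio for fixed $(\bar u,\bar v)$, then upgrades to a uniform moment bound using the exponential moment hypothesis and a compactness/continuity argument), and it can be made to work, but as written the essential steps --- Doeblin--Fortet inequality, identification of the peripheral spectrum at $\alpha=0$ using strong irreducibility, and analyticity of $\alpha\mapsto P_\alpha$ to propagate the spectral gap --- are named, not executed. That is where all the difficulty lives.

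For the second bullet, the reduction via $\nu$-stationarity to
\[
\int f_{y,\beta}\,d\nu=\E\Bigl[\bigl|\langle S_n.\bar X,\, y\rangle\bigr|^{-\beta}\Bigr]
\]
is correctly computed, but the next step is circular as stated. You propose to lower-bound $|\langle X, S_n^*y\rangle|$ by $|\langle X, v_*\rangle|\,e^{n\gamma_1}$ using exponential concentration of $S_n^*.\bar y$ near the random dual direction $\bar v_*$, and then appeal to ``for $\nu$-typical $\bar x$ one has $|\langle x,v_*\rangle|>0$.'' Almost-sure positivity (which follows from Fact~\ref{fact2}) is far weaker than what you need: to obtain $B_\mu<\infty$ you need a negative-moment bound $\E|\langle X,v_*\rangle|^{-\beta'}<\infty$ uniformly, and that is precisely the kind of H\"older-regularity statement you are trying to prove (now for the pair $(\nu,\nu^*)$ rather than $(\nu,\delta_{\bar y})$). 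The proof in \cite{prm} avoids this by combining the quantitative contraction of the first bullet with the exponential moment hypothesis to get a uniform large-deviations estimate of the form $\sup_{\|y\|=1}\P\bigl(\|S_n\|/\|S_n^*y\|>e^{\epsilon n}\bigr)\le C\rho^n$, and then balances exponents; at no point does it rely on a pre-existing moment bound for a limiting random direction. You would need to supply an estimate of this type (or an equivalent bootstrap) to close the argument.
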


\section{Lemmas and Theorems}\label{section2}
 
Now we state and prove the main lemma of this paper.
\begin{lemma}\label{lemma}
Let $Y_1,Y_2,\ldots$ be $i.i.d$ random elements of $GL_d(\R)$ with common distribution $\mu$  such that $T_{\mu}$ is strongly irreducible and contracting and for some $\tau >0$, $\E e^{\tau \ell(Y_1)}$ is finite. Set $S_n=Y_n\cdots Y_1$ and  consider a Singular value   decomposition $S_n=U_n \Sigma_n V_n^*$ with $U_n$ and $V_n$ in $O(d)$, for all $n \geq 1$. Then for any $r>0$
\begin{enumerate}
\item [(i)] for any sequence $\{\bm u_n, n \geq 1\}$ of random unit vectors in $\R^d$ such that $S_n{e_1}$ and $\bm u_n$ are independent  for each $n$, with probability one we have  that
$$
\lim_{n \to \infty} \l|\frac{\langle S_n{e_1}, \bm u_n \rangle}{\|S_n{e_1}\|} \r|^{\frac{1}{n^r}}=
\lim_{n \to \infty} |\langle U_n{e_1}, \bm u_n \rangle|^{\frac{1}{n^r}}= 1
$$

\item [(ii)]  for any sequence $\{\bm v_n, n \geq 1\}$ of random unit vectors in $\R^d$ such that $S^*_n{e_1}$ and $\bm v_n$ are independent  for each $n$, with probability one we have  that 
$$
\lim_{n \to \infty} \l|\frac{\langle S^*_n{e_1}, \bm v_n \rangle}{\|S^*_n{e_1}\|} \r|^{\frac{1}{n^r}}=
\lim_{n \to \infty} |\langle V_n{e_1}, \bm v_n \rangle|^{\frac{1}{n^r}}= 1.
$$
\end{enumerate}
\end{lemma}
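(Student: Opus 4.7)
Part (ii) follows from (i) applied to the reversed product $S_n^{*}=Y_1^{*}Y_2^{*}\cdots Y_n^{*}$, which is i.i.d.\ with distribution $\mu^{*}$: the moment condition survives because $\ell(Y_1^{*})=\ell(Y_1)$; $T_{\mu^{*}}$ inherits strong irreducibility and contracting from $T_\mu$ by Fact \ref{star}; and since $S_n^{*}=V_n\Sigma_n U_n^{*}$ is an SVD, the ``$U$'' of the reversed product is $V_n$. I therefore focus on (i). As a preparatory step I show $\overline{S_n e_1}$ and $U_n e_1$ are exponentially close a.s. Writing $v:=V_n^{*}e_1$, the SVD yields
\[
|\langle \overline{S_n e_1}, U_n e_1\rangle|
\;=\;\frac{\a_1(n)|v_1|}{\sqrt{\sum_{i=1}^d \a_i(n)^2 v_i^2}}
\;\ge\;\frac{|v_1|}{\sqrt{v_1^2+(d-1)\big(\a_2(n)/\a_1(n)\big)^2}}.
\]
Facts \ref{fact1} and \ref{fact3} give $\gam_1>\gam_2$, so $\a_2(n)/\a_1(n)\to 0$ exponentially a.s., while the $p=1$ case of Fact \ref{fact2} gives $V_n e_1\to\bm v$ a.s.\ (as a direction) with $\P(e_1\perp\bm v)=0$, so $|v_1|=|(V_n e_1)_1|$ is bounded below by a random $\kap>0$ eventually. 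Hence $\del(\overline{S_n e_1},U_n e_1)\le C e^{-cn}$ eventually a.s., for some $c,C>0$.

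The crucial step is a small-ball bound on $|\langle \overline{S_n e_1}, u\rangle|$ that is uniform in unit vectors $u$. Couple with the invariant distribution: let $\bar X_0\sim\nu$ be independent of $(Y_i)_{i\ge 1}$ and of $(\bm u_n)_{n\ge 1}$, and set $\bar X_n:=S_n.\bar X_0$; by $\mu$-invariance $\bar X_n\sim\nu$ for every $n$. Fact \ref{factkey} supplies exponents $\alp,\beta>0$ and $A\in(0,1)$ with $\E[\del(\overline{S_n e_1},\bar X_n)^{\alp}]\le A^{n}$ for $n$ large (since $\del\le 1$) and $\P(|\langle \bar X_n,u\rangle|<\del)\le B_\mu\del^{\beta}$ uniformly in unit $u$. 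The inequality $|\langle w,u\rangle|\ge |\langle x,u\rangle|-\sqrt{2}\,\del(\bar w,\bar x)$, applied to suitably signed unit-vector representatives of $\bar w,\bar x$, together with Markov's inequality then yields, uniformly in unit $u$,
\[
\P\big(|\langle \overline{S_n e_1}, u\rangle|<\del\big)\;\le\;C_1\del^{\beta}+C_2\del^{-\alp}A^{n}.
\]
Since $\bm u_n\perp\overline{S_n e_1}$, this bound holds with $u$ replaced by $\bm u_n$.

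The proof concludes by Borel--Cantelli. For any $r'\in(0,1)$ and $\eps>0$, taking $\del=e^{-\eps n^{r'}}$ renders both terms above summable in $n$ (the second because $\alp\eps n^{r'}-|\log A|\,n\to -\infty$ when $r'<1$). Borel--Cantelli over a countable dense set of pairs $(r',\eps)$ gives that almost surely, for every such pair, $|\langle \overline{S_n e_1},\bm u_n\rangle|\ge e^{-\eps n^{r'}}$ eventually. For any $r>0$, choosing $r'\in(0,\min(r,1))$ forces $-\log|\langle \overline{S_n e_1},\bm u_n\rangle|/n^r\le \eps n^{r'-r}\to 0$, giving the first equality in (i). The preparatory step transfers this to $U_n e_1$: the two inner products differ by at most $\sqrt{2}\,C e^{-cn}$, which is dominated by the sub-exponential lower bound on $|\langle \overline{S_n e_1},\bm u_n\rangle|$. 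The main obstacle is the uniform small-ball bound: both halves of Fact \ref{factkey} must be used in tandem, since the contraction estimate alone offers no control on small inner products while the $\nu$-tail estimate alone applies to $\bar X\sim\nu$ rather than to $\overline{S_n e_1}$.
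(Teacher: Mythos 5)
Your proposal follows essentially the same route as the paper: couple $\overline{S_n e_1}$ with a $\nu$-distributed direction, use both halves of Fact \ref{factkey} to get a uniform small-ball bound $\sup_{\|u\|=1}\P(|\langle\overline{S_n e_1},u\rangle|<\delta)\le C_1\delta^{\beta}+C_2\delta^{-\alpha}A^n$, choose $\delta=e^{-\eps n^{r'}}$ with $r'<1$ to apply Borel--Cantelli, and then transfer from $\overline{S_n e_1}$ to $U_n e_1$ via the SVD computation using $\gamma_1>\gamma_2$ and Fact \ref{fact2}; this is the paper's argument in a slightly different phrasing (inner-product lower bounds and the elementary inequality $\|w-x\|\le\sqrt2\,\delta(\bar w,\bar x)$ in place of the paper's triangle inequality for $\delta$ and the $b_n,a_n$ splitting). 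One point worth tightening in part (ii): $S_n^*=Y_1^*\cdots Y_n^*$ is \emph{not} a left-increment i.i.d.\ product, so part (i) does not literally apply to it; you should argue, as the paper does, that the uniform tail bound obtained for $M_n:=Y_n^*\cdots Y_1^*$ transfers to $S_n^*$ because $M_n\eqd S_n^*$ for each $n$, and then run Borel--Cantelli afresh for $S_n^*$. Likewise, for the preparatory step in (ii) the denominator is $|\langle e_1,U_ne_1\rangle|$, which does \emph{not} converge a.s.\ (unlike $V_n e_1$), so the lower bound must come from part (i) applied with $\bm u_n=e_1$ (giving only a sub-exponential lower bound), not from an analogue of Fact \ref{fact2}; since $\a_2/\a_1$ decays exponentially while $|\langle e_1,U_n e_1\rangle|^{-1}$ grows sub-exponentially, your conclusion still holds, but this needs to be said.
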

\begin{proof}
For $0\leq x \leq 1$, $x^{\frac{1}{a}}\leq x^{\frac{1}{b}} \leq 1$ if $0<a<b$. So, it is enough to prove the lemma  for  $0<r<1$.

{\bf Proof of part(i):}

Let $\bm{w}$ be a random unit vector, independent of $\{Y_n, n \geq 1\}$ and $\{\bm u_n, n \geq 1\}$, such that the  distribution of $\bm{\bar{w}}$ is  the unique $\mu$-invariant distribution $\nu$ on $\P(\R^d)$. By definition, the distribution of $S_n.\bm {\bar{w}}$ is  $\nu$ for each $n$. Since $\delta(\bar{u},\bar{v})$ is a metric on $\P(\R^d)$, we have  for any $0\leq a_n < b_n\leq 1$ and a fixed unit vector $u$,
\begin{align*}
\P(\delta(S_n. \bar{e_1}, \bar{u}) \geq b_n) &\leq \P(\delta(S_n. \bar{e_1}, S_n. \bm{\bar{w}}) \geq b_n-a_n)  + \P(\delta(S_n. \bm{\bar{w}}, \bar{u}) \geq a_n)\\
&\leq \frac{1}{b_n-a_n} \E(\delta(S_n. \bar{e_1}, S_n. \bm{\bar{w}}))+ \P(\delta(S_n. \bm{\bar{w}}, \bar{u})\geq a_n)\\
&= \frac{1}{b_n-a_n} \int \E(\delta(S_n. \bar{e_1}, S_n. {\bar{w}})) d\nu(\bar{w})+ \P(\delta(\bm{\bar{w}}, \bar{u})\geq a_n)\\ 
&\mbox{choose $\alpha <1$ and since $\delta(\bar{u},\bar{v})=\sqrt{1-|\langle u,v \rangle |^2 }\leq 1$}\\
&\leq \frac{1}{b_n-a_n} \int \E\l[\l(\frac{\delta (S_n.\bar{e_1}, S_n.\bar{w})}{\delta (\bar{e_1}, \bar{w})} \r)^{\alpha}\r]d\nu(\bar{w})+ \P(|\langle\bm{{w}}, u \rangle | \leq \sqrt{1-a_n^2} )\\
&\leq  \frac{1}{b_n - a_n} \sup_{\bar{u} \neq \bar{v}} \E\l[\l(\frac{\delta (S_n.\bar{u}, S_n.\bar{v})}{\delta (\bar{u}, \bar{v})} \r)^{\alpha} \r]+ {(1-a_n^2)^\frac{\beta}{2}}\sup_{\|y\|=1} \int \l[ \frac{1}{|\langle w,y \rangle|}\r]^{\beta} d\nu(\bar{w}).
\end{align*}
 Notice that the upper bounds we got for $\P(\delta(S_n. \bar{e_1}, \bar{u}) \geq b_n)$ do not depend on the vector $u$ for every $n$.  
It follows  from the previous fact \ref{factkey} that, for $\alpha < \min(1, \alpha_0)$
$$
\limsup_{n \to \infty} \l[\frac{1}{b_n-a_n} \sup_{\bar{u} \neq \bar{v}} \E\l[\l(\frac{\delta (S_n.\bar{u}, S_n.\bar{v})}{\delta (\bar{u}, \bar{v})} \r)^{\alpha} \r]\r]^{\frac{1}{n}}= \limsup_{n \to \infty} \l[\frac{1}{b_n-a_n}\r]^{\frac{1}{n}} A_{\mu, \alpha}
$$ 
Therefore if 
$$\limsup_{n \to \infty} \l[\frac{1}{b_n-a_n}\r]^{\frac{1}{n}} A_{\mu, \alpha}< 1 \hspace{5pt}\mbox{and} \hspace{5pt} \sum_n  {(1-a_n^2)^\frac{\beta}{2}} < \infty,$$
 then 
 $$\sum_{n=1}^{\infty} \sup_{\|u\|=1}\P\l(\frac{|\langle S_n{e_1}, u \rangle|}{\|S_n{e_1}\|} \leq \sqrt{1-b_n^2}\r)=\sum_{n=1}^{\infty} \sup_{\|u\|=1} \P(\delta(S_n. \bar{e_1}, \bar{u}) \geq b_n) < \infty.$$ 
Given $\epsilon > 0$, we can choose $b_n=\sqrt{1-e^{-2n^r\epsilon}}$ and $a_n=\sqrt{1-e^{-2n^r\epsilon_1}}$ for some $0< \epsilon_1< \epsilon$ and $0<r<1$. Then clearly
$$\limsup_{n \to \infty} \l[\frac{1}{b_n-a_n}\r]^{\frac{1}{n}} A_{\mu, \alpha}=  A_{\mu, \alpha}<1 \hspace{5pt}\mbox{and} \hspace{5pt} \sum_n  {(1-a_n^2)^\frac{\beta}{2}} < \infty.$$
Therefore for every $\epsilon > 0$ and $0<r<1$, 
$$\sum_{n=1}^{\infty} \sup_{\|u\|=1} \P\l(\frac{|\langle S_n{e_1}, u \rangle|}{\|S_n{e_1}\|} \leq e^{-n^r\epsilon}\r)< \infty.
$$
Now for a sequence $\{\bm u_n, n \geq 1\}$ of random unit vectors in $\R^d$ such that $S_n{e_1}$ and $\bm u_n$ are independent  for each $n$,  we have  that
$$\sum_{n=1}^{\infty}  \P\l(\frac{|\langle S_n{e_1}, \bm u_n \rangle|}{\|S_n{e_1}\|} \leq e^{-n^r\epsilon}\r)< \infty.
$$
By second Borel-Cantelli lemma, we conclude that with probability one for $0<r<1$, 
\begin{eqnarray}\label{limit}
\lim_{n \to \infty} \l|\frac{\langle S_n{e_1}, \bm u_n \rangle}{\|S_n{e_1}\|} \r|^{\frac{1}{n^r}}=1.
\end{eqnarray}

Recall  the singular value decomposition   $S_n=U_n \Sigma_n V_n^*$ with $U_n$ and $V_n$ in $O(d)$, for all $n \geq 1$. Let 
$\Sigma_n= \mbox{diag}(\a_1(n),\dots, \a_{d}(n))$ with $\a_1(n)\geq \a_2(n)\geq \cdots \geq \a_d(n) >0$. Now see that
\begin{align*}
\l\|\frac{S_n e_1}{\|S_n e_1\|}- U_n e_1\r\|^2 &\leq 2\l[1-\l|\frac{\langle S_n{e_1}, U_n e_1 \rangle}{\|S_n{e_1}\|}\r|^{2}\r]\\
&= 2\l[1-\frac{\a^2_1(n)|{\langle e_1,V_n e_1\rangle}|^2}{\sum_{i=1}^d \a^2_i(n){|\langle e_i,V_n e_1\rangle}|^2}\r] \\
&=2\frac{\sum_{i=2}^d \a^2_i(n){|\langle e_i,V_n e_1\rangle}|^2}{\sum_{i=1}^d \a^2_i(n){|\langle e_i,V_n e_1\rangle}|^2}\\
&\leq \frac{2}{{|\langle e_1,V_n e_1\rangle}|^2}\frac{\a^2_2(n)} {\a^2_1(n)}
\end{align*} 

From the facts  \ref{fact1}, \ref{fact2} and \ref{fact3}, we know that 
$V_n e_1$ converges almost surely to a random vector $\bm v$ and  with probability  one
$$
\lim_{n \to \infty} |\langle e_1,V_n e_1\rangle| = |\langle e_1,\bm  v\rangle| \neq 0 \hspace{5pt} \mbox{and} \hspace{5pt} \lim_{n \to \infty} \l(\frac{\a_{2}(n)}{\a_{1}(n)}\r)^{\frac{1}{n^r}}=0<1.
$$
Therefore with probability one,
\begin{eqnarray}\label{limit2}
\limsup_{n \to \infty} \l|\frac{\langle S_n{e_1}, \bm u_n \rangle}{\|S_n{e_1}\|}- \langle U_n{e_1}, \bm u_n \rangle \r|^{\frac{1}{n^r}} \leq \limsup_{n \to \infty} \l\|\frac{S_n e_1}{\|S_n e_1\|}- U_n e_1\r\|^{\frac{1}{n^r}}=0  < 1
\end{eqnarray}
If $\lim_{n \to \infty} |a_n|^{\frac{1}{n^r}}=1$ and $\limsup_{n \to \infty} |b_n|^{\frac{1}{n^r}}<1$, then $\lim_{n \to \infty} |a_n+b_n|^{\frac{1}{n^r}}=1$.  It follows from here, using equations  \eqref{limit} and \eqref{limit2}, that 
$$\lim_{n \to \infty} |\langle U_n{e_1}, \bm u_n \rangle|^{\frac{1}{n^r}}= 1
$$

\noindent {\bf Proof of part(ii):}

Let $\mu^*$ be the distribution of $Y^*_1$.  Then by fact \ref{star}, $T_{\mu ^*}$ is also strongly irreducible and contracting. Since $\ell(Y^*_1)=\ell(Y_1)$, $\E e^{\tau l(Y^*_1)}$ is also finite. Therefore $\mu^*$ satisfies the conditions in the hypothesis of this lemma. Consider the matrices $Y^*_1,Y^*_2, \ldots $ and set $M_n=Y^*_n \cdots  Y^*_1$ for each $n$. Then by the proof of part $(i)$ of this lemma, we have that for every $\epsilon >0$, 
$$\sum_{n=1}^{\infty} \sup_{\|v\|=1} \P\l(\frac{|\langle M_n{e_1},  v \rangle|}{\|M_n{e_1}\|} \leq e^{-n^r\epsilon}\r)< \infty.
$$
The distribution of $M_n$ is the same as that of $S^*_n$, so
$$\sum_{n=1}^{\infty} \sup_{\|v\|=1} \P\l(\frac{|\langle S^*_n{e_1},   v \rangle|}{\|S^*_n{e_1}\|} \leq e^{-n^r\epsilon}\r)< \infty.
$$
For any sequence $\{\bm v_n, n \geq 1\}$ of random unit vectors in $\R^d$ such that $S^*_n{e_1}$ and $\bm v_n$ are independent  for each $n$, we have that  
$$\sum_{n=1}^{\infty}  \P\l(\frac{|\langle S^*_n{e_1}, \bm v_n \rangle|}{\|S^*_n{e_1}\|} \leq e^{-n^r\epsilon}\r)< \infty.
$$
By second Borel-Cantelli lemma, we conclude that with probability one, 
\begin{eqnarray}\label{limit3}
\lim_{n \to \infty} \l|\frac{\langle S^*_n{e_1}, \bm v_n \rangle}{\|S^*_n{e_1}\|} \r|^{\frac{1}{n^r}}=1.
\end{eqnarray}
Now by a computation done earlier in the proof, we have    
$$\l\|\frac{S^*_n e_1}{\|S^*_n e_1\|}- V_n e_1\r\|^2 \leq \frac{2}{{|\langle e_1,U_n e_1\rangle}|^2}\frac{\a^2_2(n)} {\a^2_1(n)}.$$ By part $(i)$ of this lemma, we have with probability one 
$$
\lim_{n \to \infty} |\langle U_n{e_1}, e_1 \rangle|^{\frac{1}{n^r}}= 1,
$$
which implies that with probability one,
\begin{eqnarray}\label{limit5}
\limsup_{n \to \infty} \l|\frac{\langle S^*_n{e_1}, \bm v_n \rangle}{\|S^*_n{e_1}\|}- \langle V_n{e_1}, \bm v_n \rangle \r|^{\frac{1}{n^r}} \leq \limsup_{n \to \infty} \l\|\frac{S^*_n e_1}{\|S^*_n e_1\|}- V_n e_1\r\|^{\frac{1}{n^r}}=0 < 1.
\end{eqnarray} 
From the equations \eqref{limit3} and \eqref{limit5}, we can conclude that with probability one
$$\lim_{n \to \infty} |\langle V_n{e_1}, \bm v_n \rangle|^{\frac{1}{n^r}}= 1
$$

\end{proof} 
Now we proceed to the main theorem of this paper which states  conditions on the  measure $\mu$ for the first Lyapunov  and stability exponents to match.
\begin{theorem}
Let $Y_1,Y_2,\ldots$ be $i.i.d$ random elements of $GL_d(\R)$ with common distribution $\mu$  such that $T_{\mu}$ is strongly irreducible and contracting. Set $S_n=Y_n\cdots Y_1$ and  let $\b_1(n)$ be an eigenvalue of $S_n$ with maximum absolute value and $\a_1(n)$ be the maximum singular value of $S_n$, for all $n \geq 1$. If for some $\tau >0$, $\E e^{\tau \ell(Y_1)}$ is finite, then for any $r>0$ with probability one
$$
\lim_{n \to \infty} \frac{1}{n^r}\log \l(\frac{|\b_1(n)|}{\a_1(n)}\r)= 0.
$$
\end{theorem}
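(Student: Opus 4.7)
The plan is to control $|\b_1(n)|$ from below by $|\tr(S_n)|$ and then estimate the trace via a splitting of $S_n$. The upper bound $\frac{1}{n^r}\log(|\b_1(n)|/\a_1(n))\le 0$ is immediate, since the spectral radius is bounded by the operator norm. For the matching lower bound I use $|\tr(S_n)| = |\b_1(n)+\cdots+\b_d(n)|\le d|\b_1(n)|$, which reduces the task to showing $\frac{1}{n^r}\log(|\tr(S_n)|/\a_1(n))\to 0$ almost surely. By the same monotonicity in $r$ noted in the first line of the proof of Lemma \ref{lemma}, it is enough to treat $0<r<1$.

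Split the product as $S_n=AB$ where $A=Y_n\cdots Y_{k+1}$, $B=Y_k\cdots Y_1$, with $k=\lfloor n/2\rfloor$ and $m=n-k$. Then $A$ and $B$ are independent products of $\Theta(n)$ iid matrices of distribution $\mu$. Writing SVDs $A=U_A\Sigma_A V_A^*$, $B=U_B\Sigma_B V_B^*$ and introducing the orthogonal matrices $W=V_A^* U_B$, $Z=V_B^* U_A$, cyclicity of trace yields
$$\tr(S_n)=\tr(\Sigma_A W\Sigma_B Z)=\a_1(A)\a_1(B)W_{11}Z_{11}+R_n,$$
with $|R_n|\le d^2\max(\a_1(A)\a_2(B),\,\a_2(A)\a_1(B))$ because the entries of $W$ and $Z$ are bounded in modulus by $1$. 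Also $\a_1(S_n)=\|AB\|\le \a_1(A)\a_1(B)$, so
$$\frac{|\tr(S_n)|}{\a_1(S_n)}\ge |W_{11}||Z_{11}|-d^2\max\l(\frac{\a_2(A)}{\a_1(A)},\frac{\a_2(B)}{\a_1(B)}\r).$$
By Facts \ref{fact1} and \ref{fact3}, $\gamma_1>\gamma_2$ and the ratios $\a_2/\a_1$ of $A$ and $B$ decay at an exponential rate in $n$, so the error term is a.s.\ at most $e^{-cn}$ for some fixed $c>0$ and all large $n$.

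For the main term, the pairs $(U_A,V_A)$ and $(U_B,V_B)$ are independent, because they depend on disjoint blocks of the iid sequence. Set $\bm v_m:=U_B e_1$ and $\bm u_m:=V_B e_1$, both random unit vectors independent of $A$. Applying Lemma \ref{lemma}(ii) to the product $A$ (which has the same distribution as $S_m$) with the vector $\bm v_m$ gives
$$|W_{11}|^{1/m^r}=|\langle V_A e_1,U_B e_1\rangle|^{1/m^r}\to 1 \quad\text{a.s.},$$
and Lemma \ref{lemma}(i) applied to $A$ with $\bm u_m$ gives $|Z_{11}|^{1/m^r}\to 1$ a.s. Since $m\ge n/2$, for every $\epsilon>0$ one has $|W_{11}||Z_{11}|\ge e^{-\epsilon n^r}$ eventually, which for $0<r<1$ dominates the exponentially small error term $d^2 e^{-cn}$. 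Combining these bounds gives $|\tr(S_n)|/\a_1(S_n)\ge\tfrac12 e^{-\epsilon n^r}$ eventually, so $\frac{1}{n^r}\log(|\b_1(n)|/\a_1(n))\ge -\epsilon+o(1)$ almost surely, and letting $\epsilon\downarrow 0$ concludes the proof.

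The main subtlety is justifying the application of Lemma \ref{lemma} to the translated product $A$ with a random unit vector coming from the independent block $B$. This should work because the proof of Lemma \ref{lemma} rests on the translation-invariant tail bound $\sum_n\sup_{\|u\|=1}\P(|\langle S_n e_1,u\rangle|/\|S_n e_1\|\le e^{-\epsilon n^r})<\infty$ combined with Borel-Cantelli, and the required independence of $\bm u_m,\bm v_m$ from $A$ follows from the block decomposition.
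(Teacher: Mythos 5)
The overall plan — reduce to the trace, split $S_n$ into two independent blocks, expand via SVDs of each block, control the $(1,1)$ cross term with Lemma~\ref{lemma}, and show the remaining terms are exponentially negligible — is exactly the paper's strategy. But your choice of split index $k=\lfloor n/2\rfloor$ introduces a genuine gap, and the Lemma is applied to the wrong block.

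\textbf{The error term.} You claim that $\a_2(A)/\a_1(A)$ decays at an exponential rate in $n$ ``by Facts~\ref{fact1} and~\ref{fact3}.'' Fact~\ref{fact1} gives a.s.\ convergence of $\tfrac1n\log\a_p(S_n)$ along the nested sequence $S_n=Y_n\cdots Y_1$; it says nothing about the family $A_n=Y_n\cdots Y_{\lfloor n/2\rfloor+1}$, which is not nested (the starting index moves). Equality in distribution with $S_m$ gives the right expectation but not pathwise convergence. The elementary route via the sandwich inequality $\a_p(n,k)\a_d(k,0)\le \a_p(n,0)\le \a_p(n,k)\a_1(k,0)$ controls $\a_2(A)/\a_1(A)$ only when $k=o(n)$; with $k\sim n/2$ it gives
$$\frac{1}{n}\log\frac{\a_2(A)}{\a_1(A)}\;\lesssim\;(\gamma_2-\gamma_1)+\tfrac12(\gamma_1-\gamma_d)=\gamma_2-\frac{\gamma_1+\gamma_d}{2},$$
which is nonnegative whenever $\gamma_2\ge(\gamma_1+\gamma_d)/2$, so the error is not controlled. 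You would need a large-deviation (or Borel--Cantelli-with-exponential-tail) argument for $\tfrac1m\log(\a_2(S_m)/\a_1(S_m))$; that is additional machinery the paper avoids by taking $k\sim n^r$ with $r<1$, which makes the block-$B$ contributions subexponential in $n$ and lets the sandwich argument go through verbatim.

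\textbf{Applying Lemma~\ref{lemma}.} You invoke Lemma~\ref{lemma} \emph{for the product $A$} with the independent random vectors $U_Be_1$, $V_Be_1$. The distributional tail bound $\sum_n\sup_{\|u\|=1}\P(|\langle S_ne_1,u\rangle|/\|S_ne_1\|\le e^{-n^r\epsilon})<\infty$ indeed transfers to $A_n$ by equality in distribution, and handles the conclusion phrased in terms of $A_ne_1/\|A_ne_1\|$. But Lemma~\ref{lemma}'s second conclusion — about $U_ne_1$ and $V_ne_1$ rather than $S_ne_1/\|S_ne_1\|$ — is derived using the a.s.\ convergence of $V_ne_1$ to a limiting vector and the a.s.\ decay of $\a_2(n)/\a_1(n)$ along the \emph{nested} sequence; neither transfers to $A_n$ without extra work, and the latter is exactly the gap above. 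The clean fix (and what the paper actually does) is to apply Lemma~\ref{lemma} only to the \emph{nested} block $B=S_k$: part (i) with $\bm u_k=V_{A}e_1$ gives $|\langle U_Be_1,V_Ae_1\rangle|^{1/k^r}\to1$ a.s., and part (ii) with $\bm v_k=U_Ae_1$ gives $|\langle V_Be_1,U_Ae_1\rangle|^{1/k^r}\to1$ a.s.; no application of the Lemma to translated products is needed. If you also change $k$ to grow like $n^r$, both gaps close and your argument becomes essentially the paper's proof.
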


\begin{proof}

Since $|\Tr(S_n)| \leq d |\b_1(n)| \leq d\a_1(n)$, it is enough to show that, with probability one
$$
\lim_{n \to \infty} \frac{1}{n^r}\log \frac{|\Tr(S_n)|}{\a_1(n)}=0.
$$
Also, it is enough to show for $0<r<1$.

For $n>k\geq  0$, set $S_{n,k}= Y_n\cdots Y_{k+1}$. See that $S_{n,0}=S_n$ and $S_{n,0}=S_{n,k}S_{k,0}$. Consider a singular value decomposition   $S_{n,k}=U_{n,k} \Sigma_{n,k} V_{n,k}^*$ with $U_{n,k}$ and $V_{n,k}$ in $O(d)$ and 
$\Sigma_{n,k}= \mbox{diag}(\a_1(n,k),\dots, \a_{d}(n,k))$ with $\a_1(n,k)\geq \a_2(n,k)\geq \cdots \geq \a_d(n,k) >0$, for all $n>k \geq  0$. 
Observe that 
\begin{align*}
\Tr(S_n) &= \Tr(S_{n,k}S_{k,0})=\Tr(U_{n,k} \Sigma_{n,k} V_{n,k}^*U_{k,0} \Sigma_{k,0} V_{k,0}^*)\\
&=\Tr( \Sigma_{n,k} V_{n,k}^*U_{k,0} \Sigma_{k,0} V_{k,0}^*U_{n,k})\\
&= \sum_{i=1}^d \a_i(n,k) \langle e_i, V_{n,k}^*U_{k,0} \Sigma_{k,0} V_{k,0}^*U_{n,k}e_i\rangle\\
&= \sum_{i=1}^d \sum_{j=1}^d \a_i(n,k) \langle e_i, V_{n,k}^*U_{k,0}e_j \rangle \a_j(k,0) \langle e_j, V_{k,0}^*U_{n,k}e_i\rangle\\
&= \sum_{i=1}^d \sum_{j=1}^d \a_i(n,k)\a_j(k,0) \langle V_{n,k}e_i, U_{k,0}e_j \rangle  \langle V_{k,0}e_j, U_{n,k}e_i\rangle.
\end{align*}

By Fact \ref{fact1}, we have  with probability one, for all $1 \leq p \leq d$,
\begin{equation}\label{eqn}
\lim_{n \to \infty} \frac{1}{n}\log \a_p(n,0)= \gamma_p.
\end{equation}
Since $\a_d^2(k,0)S_{n,k}S^*_{n,k} \leq S_nS_n^* \leq \a_1^2(k,0)S_{n,k}S^*_{n,k}$,
 we can see by Min-max theorem that for all $1 \leq p \leq d$,
\begin{equation}\label{eq100}
\a_p(n,k) \a_d(k,0) \leq \a_p(n,0) \leq \a_p(n,k) \a_1(k,0).\end{equation}
Since $\E e^{\tau \ell(Y_1)}$ is finite, $\E(\log^{+}\|Y_1^{-1}\|)$ is also finite  which implies that  all the Lyapunov exponents $\gamma_1,\gamma_2, \ldots , \gamma_d$ associated with $\mu$ are finite.
Let $k \to \infty$ and $n \to \infty$ such that $\frac{k}{n^r} \to 1$ for $0<r<1$, then  by  \eqref{eqn} and \eqref{eq100} we have with probability one, for all $1 \leq p \leq d$,
\begin{equation}\label{eq1}
\lim_{{n \to \infty} } \frac{1}{n}\log \a_p(n,k)= \gamma_p.
\end{equation}

  Since  $T_{\mu}$ is strongly irreducible and contracting, by Fact \ref{fact3}, $\gamma_1> \gamma_2$. This implies that for $i \neq 1$ or $j \neq 1$ with probability one as  $\frac{k}{n^r} \to 1$ for $0<r<1$
\begin{equation}\label{eq120}
\lim_{n \to \infty} \l|\frac{\a_i(n,k)\a_j(k,0)}{\a_1(n,k)\a_1(k,0)}\r|^{\frac{1}{n^r}} \leq \frac{\gamma_2}{\gamma_1}<1.
\end{equation}
Since $V_{n,k}e_1$, $U_{n,k}e_1$  are independent of  $U_{k,0}e_1$,$  V_{k,0}e_1$ for every $n>k>0$, by previous lemma \ref{lemma} as $k \to \infty$ and $n \to \infty$ such that $\frac{k}{n^r} \to 1$ for $0<r<1$ we get with probability one that 
\begin{equation}\label{121}
\lim_{n \to \infty} \l| \langle V_{n,k}e_1, U_{k,0}e_1 \rangle  \langle V_{k,0}e_1, U_{n,k}e_1\rangle\r|^{\frac{1}{n^r}}=1
\end{equation}
  
If $\lim_{n \to \infty} |a_n|^{\frac{1}{n^r}}=1$ and $\limsup_{n \to \infty} |b_n|^{\frac{1}{n^r}}<1$, then $\lim_{n \to \infty} |a_n+b_n|^{\frac{1}{n^r}}=1$.
So, using \eqref{eq120} and \eqref{121} in $\Tr(S_n)$ we get that with probability one
$$
\lim_{n \to \infty} \l| \frac{|\Tr(S_n)|}{\a_1(n,k)\a_1(k,0)}\r|^{\frac{1}{n^r}}=1.
$$
Since $|\Tr(S_n)| \leq d |\b_1(n)| \leq d\a_1(n) \leq d\a_1(n,k) \a_1(k,0) $, we get that with probability one
$$
\lim_{n \to \infty} \l| \frac{\b_1(n)}{\a_1(n)}\r|^{\frac{1}{n^r}}=1.
$$
This concludes the proof.
\end{proof}

\begin{remark}
Under the hypothesis of the above theorem, it is known that (see Theorem 5.1 Chapter V \cite{prm}) $\sqrt{n}(\frac{1}{n} \log \a_1(n) - \gamma_1)$ converges in distribution to $N(0,a^2)$ for some $a>0$. From the above theorem, by taking $r= \frac{1}{2}$, we get that $\sqrt{n}(\frac{1}{n} \log |\b_1(n)| - \gamma_1)$ also converges in distribution to $N(0,a^2)$.
\end{remark}

With further conditions on $T_{\mu}$, we can prove the result for all the eigenvalues of $S_n$. 
\begin{theorem}\label{theorem2}
Let $Y_1,Y_2,\ldots$ be $i.i.d$ random elements of $GL_d(\R)$ with common distribution $\mu$  such that $T_{\mu}$ is $p$-strongly irreducible and $p$-contracting for all $1 \leq p \leq d$. Set $S_n=Y_n\cdots Y_1$ and  let $\b_1(n), \b_2(n), \ldots ,\b_d(n)$ be  the eigenvalues of $S_n$ such that $|\b_1(n)| \geq |\b_2(n)| \geq \cdots \geq |\b_d(n)|>0$ and $\a_1(n), \a_2(n), \ldots ,\a_d(n)$ be  the singular values of $S_n$ such that $|\a_1(n)| \geq |\a_2(n)| \geq \cdots \geq |\a_d(n)|>0$, for all $n \geq 1$. If for some $\tau >0$, $\E e^{\tau \ell(Y_1)}$ is finite, then for any  $r>0$ with probability one 
$$
\lim_{n \to \infty} \frac{1}{n^r}\log \l(\frac{|\b_p(n)|}{\a_p(n)}\r)= 0,
$$
for all $1 \leq p \leq d$.
\end{theorem}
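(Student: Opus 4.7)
The plan is to lift the theorem just established (the $p=1$ case) from $S_n$ to each exterior power $\wedge^p S_n$, exploiting the multiplicativity $\wedge^p(MN)=(\wedge^p M)(\wedge^p N)$. Since $\wedge^p S_n=(\wedge^p Y_n)\cdots(\wedge^p Y_1)$ is itself a product of i.i.d.\ invertible matrices with common law $\wedge^p\mu$, and since the singular values of $\wedge^p M$ are the products $\a_{i_1}(M)\cdots\a_{i_p}(M)$ and the eigenvalues are the products $\b_{i_1}(M)\cdots\b_{i_p}(M)$ indexed by $p$-subsets $\{i_1<\cdots<i_p\}\subset\{1,\ldots,d\}$, the top singular value of $\wedge^p S_n$ equals $\a_1(n)\cdots\a_p(n)$ and its eigenvalue of maximum absolute value is $|\b_1(n)\cdots\b_p(n)|$.

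Next I would verify the hypotheses of the preceding theorem for the law $\wedge^p\mu$. Strong irreducibility and contraction of $T_{\wedge^p\mu}$ are precisely the $p$-strong irreducibility and $p$-contraction of $T_\mu$ that are assumed in the statement (compare Remark \ref{remark2}). The exponential moment condition transfers from $Y_1$ to $\wedge^p Y_1$ because $\|\wedge^p M\|\le\|M\|^p$ and $\|(\wedge^p M)^{-1}\|=\|\wedge^p(M^{-1})\|\le\|M^{-1}\|^p$, so $\ell(\wedge^p Y_1)\le p\,\ell(Y_1)\le d\,\ell(Y_1)$ and hence $\E\,e^{(\tau/d)\,\ell(\wedge^p Y_1)}\le\E\,e^{\tau\,\ell(Y_1)}<\infty$. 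Applying the preceding theorem to the product $\wedge^p S_n$ then gives, for every $p\in\{1,\ldots,d\}$ and every $r>0$, with probability one
$$
\lim_{n\to\infty}\frac{1}{n^r}\log\frac{|\b_1(n)\cdots\b_p(n)|}{\a_1(n)\cdots\a_p(n)}=0.
$$

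Finally, I would recover the statement for the individual index $p$ by a telescoping identity. Setting $L_p(n):=\log\bigl(|\b_1(n)\cdots\b_p(n)|/(\a_1(n)\cdots\a_p(n))\bigr)$ with $L_0(n):=0$, one has $\log\bigl(|\b_p(n)|/\a_p(n)\bigr)=L_p(n)-L_{p-1}(n)$, and dividing by $n^r$ and intersecting the probability-one events on which $n^{-r}L_p(n)\to0$ for each $p$ delivers the conclusion. I do not foresee a genuine obstacle; the mildly delicate points are the moment inheritance $\ell(\wedge^p Y_1)\le p\,\ell(Y_1)$ (a short check using the singular value description of the exterior power noted in Section \ref{section1}) and the identification of the dominant eigenvalue of $\wedge^p S_n$ with $\b_1(n)\cdots\b_p(n)$, which is an immediate consequence of the ordering of $|\b_1(n)|\ge\cdots\ge|\b_d(n)|$.
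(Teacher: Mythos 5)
Your proposal is correct and follows essentially the same route as the paper: pass to the exterior power $\wedge^p S_n$, verify that $\wedge^p\mu$ inherits strong irreducibility, contraction, and the exponential moment (via $\ell(\wedge^p Y_1)\le p\,\ell(Y_1)$, exactly the inequality the paper invokes), apply the $p=1$ theorem, and then telescope. The only difference is cosmetic: the paper leaves the final telescoping step implicit with ``which proves the theorem,'' whereas you spell it out with $L_p(n)-L_{p-1}(n)$.
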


\begin{proof}
The distinctness of the Lyapunov exponents follows from Remark \ref{remark2}.
 Since $\frac{1}{p} l(\wedge^p M) \leq \ell(M)$ and  $T_{\mu}$ is $p$-strongly irreducible and $p$-contracting, for any $1 \leq p \leq d$, the sequence $\{\wedge^p Y_n, n \geq 1\}$ of random matrices satisfies the conditions of the  previous Theorem. $\b_1(n) \b_2(n)\cdots \b_p(n)$ is an eigenvalue of $\wedge^p Y_1 \wedge^p Y_2 \cdots \wedge^p Y_n$ with maximum absolute value. Therefore, with probability one, for all $1 \leq p \leq d$
 $$
 \lim_{n \to \infty} \frac{1}{n^r}\log \l(\frac{|\b_1(n) \cdots \b_p(n)|}{\a_1(n) \cdots \a_p(n)}\r)= 0,
 $$
 which proves the theorem.
\end{proof}
 
 The following facts give us some  criteria which can be used to check if $T_{\mu}$ is $p$-strongly irreducible and $p$-contracting, for any $1 \leq p \leq d$. 
 
\begin{fact}
$T_{\mu}$ is $p$-strongly irreducible and $p$-contracting, for any $1 \leq p \leq d$, if there exists a matrix $M$ such that 
\begin{itemize}
\item $\|M\|^{-1}M$ is not orthogonal,
\item for any $K \in O(d)$, $KMK^{-1}$ is in $T_{\mu}$.
\end{itemize}
\end{fact}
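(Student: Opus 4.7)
Since $\wedge^d \R^d$ is one-dimensional, the conditions are vacuous when $p = d$; I focus on $1 \leq p \leq d - 1$. The plan is first to produce a single element $N \in T_\mu$ whose eigenvalues $\lambda_1, \ldots, \lambda_d$ satisfy $|\lambda_1| > |\lambda_2| > \cdots > |\lambda_d| > 0$, and then to derive both conclusions from $N$ together with the observation that $T_\mu$ is closed under $O(d)$-conjugation. The latter follows from the hypothesis: inserting $L L^T = I$ between factors, one checks that for $L \in O(d)$, $L^T (\prod_i K_i M K_i^T) L = \prod_i (L^T K_i) M (L^T K_i)^T \in T_\mu$.

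To construct $N$, I consider the real-analytic map
\begin{equation*}
\Phi_m : O(d)^m \to GL_d(\R), \qquad \Phi_m(K_1, \ldots, K_m) = \prod_{i=1}^m K_i M K_i^T,
\end{equation*}
whose image lies in $T_\mu$. The locus in $GL_d(\R)$ where eigenvalues fail to have pairwise distinct absolute values is a proper real-algebraic subvariety, so it suffices to show that the image of $\Phi_m$ is not contained in any proper subvariety for some $m$. The hypothesis $\|M\|^{-1}M \notin O(d)$ is precisely what prevents the degenerate situation: if $M$ were a scalar multiple of an orthogonal matrix, each factor $K_i M K_i^T$ would too, and the whole image would lie in $\R^* \cdot O(d)$, on which all eigenvalues have a common modulus. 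Absent that, the differential of $\Phi_m$ at $(I, \ldots, I)$ sends $(A_1, \ldots, A_m) \in \mathfrak{so}(d)^m$ to $\sum_{i=1}^m M^{i-1}[A_i, M] M^{m-i}$, and the key claim is that these tangent vectors span $M_d(\R)$ for $m$ sufficiently large as soon as $M^T M$ is not scalar.

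Granted such $N$, $p$-contracting is immediate: the eigenvalues of $\wedge^p N$ are the products $\lambda_{i_1} \cdots \lambda_{i_p}$ over $p$-subsets, so $|\lambda_1 \cdots \lambda_p|$ is strictly dominant and $\wedge^p N^n / |\lambda_1 \cdots \lambda_p|^n$ converges to the rank-one spectral projector onto the dominant eigenline of $\wedge^p N$, furnishing a sequence in $\wedge^p T_\mu$ whose normalized limit has rank one. For $p$-strong irreducibility I argue by contradiction: suppose a finite union $W_1 \cup \cdots \cup W_r$ of proper subspaces of $\wedge^p \R^d$ is $\wedge^p T_\mu$-invariant. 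Since $K N K^T \in T_\mu$ for every $K \in O(d)$, the map $K \mapsto \wedge^p(K N K^T)$ permutes $\{W_i\}$; continuity in $K$ and the discreteness of $S_r$ force the induced permutation to be constant on each connected component of $O(d)$. Because $\wedge^p N$ has $\binom{d}{p}$ distinct eigenvalues it admits only finitely many invariant subspaces of each fixed dimension; combining this with the above continuity shows $\wedge^p K^T W_i = W_i$ for every $K \in SO(d)$, i.e., $W_i$ is $\wedge^p(SO(d))$-invariant. When $p \neq d/2$ or $d$ is odd, $SO(d)$ acts irreducibly on $\wedge^p \R^d$, so $W_i$ is trivial or everything, contradicting properness. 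In the residual case $d$ even and $p = d/2$, $W_i$ must be one of the self-dual or anti-self-dual Hodge eigenspaces $V_\pm$; but then $\wedge^{d/2} T_\mu$-invariance of $\{V_+, V_-\}$ forces each element of $T_\mu$, including $M$, into $\R^* \cdot O(d)$, again contradicting the hypothesis.

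The principal technical obstacle is the Jacobian computation: proving that the span of $\{M^{i-1}[A, M] M^{m-i} : A \in \mathfrak{so}(d),\, 1 \leq i \leq m\}$ equals $M_d(\R)$ for some $m$ whenever $M^T M$ is not a scalar matrix. This is a purely linear-algebra statement about the commutation properties of $M$ against the skew-symmetric algebra, and its resolution is the heart of why the sole condition $\|M\|^{-1}M \notin O(d)$ suffices to rule out every bad subvariety (including the one arising in the Hodge-decomposition case above).
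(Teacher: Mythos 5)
The paper does not prove this fact; it is quoted verbatim from Bougerol--Lacroix (Proposition~2.5, Chapter~IV of~\cite{prm}), so there is no in-paper argument to compare against. Assessed on its own merits, your proof has a fundamental flaw beyond the gap you flag. You assert that ``the locus in $GL_d(\R)$ where eigenvalues fail to have pairwise distinct absolute values is a proper real-algebraic subvariety.'' This is false for $d\geq 2$: a real matrix with a non-real eigenvalue automatically has a conjugate pair of equal modulus, and the set of real matrices with some non-real eigenvalue is \emph{open} and nonempty. So the ``bad'' locus has nonempty interior and your genericity/Jacobian strategy cannot touch it. (Separately, the image of $\Phi_m$ is confined to the hypersurface $\{\det X = (\det M)^m\}$, so the differential you write down can never be surjective onto $M_d(\R)$; and at $(I,\dots,I)$ with $M$ diagonal, $[A,M]$ has zero diagonal for every $A\in\mathfrak{so}(d)$, so even the restricted surjectivity fails there.) You correctly identify the Jacobian computation as the heart of the matter, but the framing around it --- ``not contained in any proper subvariety $\Rightarrow$ hits the good locus'' --- does not close, because the good locus is not dense.

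There are two further unflagged gaps. First, the strong-irreducibility argument uses that $\wedge^p N$ has $\binom{d}{p}$ \emph{distinct} eigenvalues, which does not follow from $N$ having eigenvalues of pairwise distinct moduli: with $d=4,\ p=2$ and moduli $8>4>2>1$ one gets $8\cdot 1 = 4\cdot 2$. You would need the stronger (still generic, but unverified) condition that all $p$-fold products of moduli are distinct for every $p$. Second, in the Hodge case $p=d/2$ you assert without proof that $\wedge^{d/2}$-invariance of $\{V_+,V_-\}$ forces $M\in\R^{*}O(d)$; this is plausible but nontrivial, and in fact the residual case only arises for $d\equiv 0\pmod 4$ (for $d\equiv 2\pmod 4$ the Hodge star is a complex structure and $\wedge^{d/2}\R^d$ remains $SO(d)$-irreducible over $\R$). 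The contracting half of the argument (power iteration on $\wedge^p N$) is fine once such an $N$ exists, and the reduction to the subsemigroup generated by $\{KMK^{T}\}$ is legitimate, but the construction of $N$ itself --- the single pillar supporting everything --- is not established and the stated route to it cannot work as written.
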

See proposition 2.5, Chapter IV, \cite{prm} for the above fact. In the case of isotropic random matrices, the above fact can be used  to verify the conditions of the theorem. 
 
 \begin{fact}\label{magic}
 If $\{|\det{M}|^{-\frac{1}{d}}M; M \in T_{\mu}\}$ contains an open set of $\{M \in GL_d(\R); |\det M|=1 \}$, then $T_{\mu}$ is $p$-strongly irreducible and $p$-contracting, for any $1 \leq p \leq d$.
 \end{fact}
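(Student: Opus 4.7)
My plan is to establish the two conclusions separately. The case $p=d$ is vacuous since $\wedge^d\R^d$ is one-dimensional, so strong irreducibility has no content and contraction is automatic. Assume henceforth $1\le p\le d-1$. The key opening observation is that $T_\mu$ contains the open subset $\R_{>0}\cdot U$ of $GL_d(\R)$, where $U$ is the open subset of $\{M\in GL_d(\R):|\det M|=1\}$ supplied by the hypothesis. Since a non-empty open subset of $GL_d(\R)$ is Zariski dense in the irreducible algebraic group $GL_d$, it follows that $T_\mu$ is Zariski dense in $GL_d(\R)$.

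For $p$-strong irreducibility, I argue by contradiction. Suppose $\{\wedge^p M:M\in T_\mu\}$ preserves a finite union $W_1\cup\cdots\cup W_k$ of proper nonzero subspaces of $\wedge^p\R^d$. Since each $\wedge^p M$ with $M\in T_\mu$ permutes $\{W_1,\ldots,W_k\}$, one has $T_\mu\subset\bigcup_{\sigma\in S_k}Z_\sigma$, where $Z_\sigma:=\{M\in GL_d(\R):\wedge^p M(W_i)=W_{\sigma(i)}\text{ for all }i\}$ is Zariski closed. The finite union $\bigcup_\sigma Z_\sigma$ is then Zariski closed and contains the non-empty open set $\R_{>0}\cdot U$; since a proper Zariski-closed subset of $GL_d(\R)$ has empty interior in the usual topology, the union must equal all of $GL_d(\R)$. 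By Zariski irreducibility of $GL_d$, some individual $Z_\sigma$ equals $GL_d(\R)$, and evaluating at $M=I$ forces $\sigma=\mathrm{id}$. Hence each $W_i$ is $GL_d(\R)$-invariant under the $\wedge^p$ representation, contradicting its irreducibility for $1\le p\le d-1$.

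For $p$-contraction, I invoke the standard theorem (Goldsheid--Margulis, with a predecessor due to Guivarc'h--Raugi) that a Zariski-dense subsemigroup of $GL_d(\R)$ acting irreducibly on $\wedge^p\R^d$ contains a $\wedge^p$-proximal element: some $A\in T_\mu$ such that $\wedge^p A$ has a simple eigenvalue of strictly maximal modulus. The powers $\{A^n\}$ lie in $T_\mu$, and standard spectral theory gives that $\|\wedge^p A^n\|^{-1}\wedge^p A^n$ converges to a rank-one matrix (the spectral projector onto the top eigenline of $\wedge^p A$), which witnesses $p$-contraction.

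The main obstacle I anticipate is the existence of the proximal element: the open set $U$ itself can consist entirely of non-proximal matrices (for instance, an open neighborhood of elliptic elements of $SL_2(\R)$), so proximality is not guaranteed for individual elements of $U$ but only for suitable products inside the semigroup. The Zariski-density route via Goldsheid--Margulis handles this uniformly; a more elementary proof would have to exhibit an explicit proximal product and would require careful case analysis.
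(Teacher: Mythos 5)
The paper offers no proof of this fact; it is a citation to Bougerol--Lacroix (Proposition 2.3 and Remark 2.4 of Chapter IV), so your proof is being judged on its own terms rather than against a proof in the paper.

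Your overall strategy (Zariski density of the semigroup, then algebraic-geometry for $p$-strong irreducibility, then Goldsheid--Margulis/Guivarc'h--Raugi for $p$-contraction) is sound, and the $p=d$ observation and the $Z_\sigma$ permutation trick are both correct. But the opening step is wrong: the hypothesis $\{|\det M|^{-1/d}M : M\in T_\mu\}\supseteq U$ says only that for each $N\in U$ \emph{some} positive scalar multiple $c_N N$ lies in $T_\mu$, not that $\R_{>0}\cdot U\subseteq T_\mu$. Consequently, your conclusion that $T_\mu$ is Zariski dense in $GL_d(\R)$ does not follow and can genuinely fail: if $\mu$ is supported on $SL_d(\R)$ with $\mathrm{supp}\,\mu$ having non-empty interior there, the hypothesis holds with $U$ an open subset of $SL_d(\R)$, yet $T_\mu\subseteq SL_d(\R)$, a proper Zariski-closed subgroup of $GL_d(\R)$.

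The gap is repairable without abandoning your approach. Replace $T_\mu$ by the cone $\R_{>0}\cdot T_\mu$, which is again a semigroup, does contain $\R_{>0}\cdot U$ (hence is open somewhere, hence Zariski dense in $GL_d(\R)$), and agrees with $T_\mu$ for every scale-invariant property: each $Z_\sigma$ in your irreducibility argument is invariant under positive scalar multiplication of $M$ (since $\wedge^p(cM)(W_i)=c^p\wedge^p M(W_i)$), and $\wedge^p$-proximality of $A$ is likewise unchanged by replacing $A$ with $cA$. So running your argument on $\R_{>0}\cdot T_\mu$ and transferring back via scale-invariance recovers both conclusions for $T_\mu$. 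One secondary remark: invoking Goldsheid--Margulis for the proximal element is considerably heavier machinery than the textbook route the paper cites; that is not an error, but you should be aware you are relying on a deep theorem where the standard reference proceeds more directly from the presence of an open set.
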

See proposition 2.3, Remart 2.4, Chapter IV, \cite{prm} for the above fact.
Matrices with $i.i.d$ elements are the most   basic examples of random matrix theory.  The following corollary gives conditions on  the distribution of matrix elements for the main theorem to hold.

\begin{corollary}
Let $\xi$ be a real valued continuous  random variable whose support contains an open set and there exists $\tau>0$ such that 
$$
\E |\xi|^{\tau} < \infty \hspace{5pt} \mbox{and} \hspace{5pt} \sup_{a \in \R} \E \l|{\xi-a}\r|^{-\tau} < \infty.$$ Let $X_1,X_2, \ldots $ be a sequence of $d \times d $ $i.i.d$ random matrices whose elements are $i.i.d$ random variables distributed like $\xi$ and  $\b_1(n), \b_2(n), \ldots , \b_d(n)$ be  the eigenvalues, $\a_1(n), \a_2(n), \ldots , \a_d(n)$ be  the singular values of $X_nX_{n-1}\cdots X_1$ in the decreasing order of their absolute values for every $n$. Then for any $r>0$ with probability one for all $1 \leq p \leq d$,
$$
\lim_{n \to \infty} \frac{1}{n^r}\log \l(\frac{|\b_p(n)|}{\a_p(n)}\r)= 0.
$$
\end{corollary}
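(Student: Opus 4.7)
The plan is to verify the two hypotheses of Theorem \ref{theorem2} for $\mu = \mathrm{Law}(X_1)$ and then invoke it directly. The hypotheses are: (a) $T_\mu$ is $p$-strongly irreducible and $p$-contracting for every $1 \leq p \leq d$; and (b) $\E e^{\tau'\ell(X_1)} < \infty$ for some $\tau' > 0$.

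For (a), I would apply Fact \ref{magic}. Let $I \subset \R$ be an open interval contained in $\supp(\xi)$. Since the entries are independent, the cube $I^{d^2}$ is an open subset of $\R^{d^2}$ that has positive $\mu$-probability; removing the closed Lebesgue-null subvariety $\{\det M = 0\}$ still leaves a nonempty open subset of $GL_d(\R)$ inside $\supp(\mu) \subset T_\mu$. The map $\Phi : GL_d(\R) \to \{M : |\det M| = 1\}$ defined by $\Phi(M) = |\det M|^{-1/d} M$ is a smooth submersion (the quotient by the positive homothety action), hence open; so $\Phi(T_\mu)$ contains a nonempty open subset of $\{|\det M| = 1\}$, and Fact \ref{magic} delivers (a).

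For (b), the upper tail is routine: $\|X_1\| \leq d \max_{i,j} |(X_1)_{ij}|$, so for $\tau' \leq \tau$ the moment $\E\|X_1\|^{\tau'}$ is finite and hence $\E e^{\tau' \log^+ \|X_1\|} < \infty$. The lower tail is handled via $X_1^{-1} = (\det X_1)^{-1}\,\mathrm{adj}(X_1)$: the entries of $\mathrm{adj}(X_1)$ are polynomials of degree $d-1$ in the entries of $X_1$, so $\|\mathrm{adj}(X_1)\|$ has all positive moments up to order $\tau/(d-1)$, and by Cauchy--Schwarz the problem reduces to proving $\E|\det X_1|^{-\tau'} < \infty$ for some $\tau' > 0$.

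This negative moment bound on the determinant is the main technical step. I would prove by induction on $d$ that $\E|\det X_1|^{-\tau}$ is finite, with the same $\tau$ as in the hypothesis. For $d = 1$ one has $|\det X_1| = |\xi|$, so the bound follows from $\sup_a \E|\xi - a|^{-\tau} < \infty$ by taking $a = 0$. For the inductive step, expand along the first column:
$$
\det X_1 = y_{11} M_{11} + R,
$$
where $M_{11}$ is the $(1,1)$ minor --- a $(d-1) \times (d-1)$ determinant of i.i.d.\ $\xi$ entries --- and $R$ depends on the remaining first-column entries and the corresponding lower minors. Both $M_{11}$ and $R$ are independent of $y_{11}$, and $\P(M_{11} = 0) = 0$ since $M_{11}$ is a nonzero polynomial in independent continuous random variables. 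On the event $M_{11} \neq 0$, write $\det X_1 = M_{11}(y_{11} + R/M_{11})$; conditioning on the entries other than $y_{11}$ and using $y_{11} \eqd \xi$ together with the hypothesis $\sup_a \E|\xi - a|^{-\tau} \leq K < \infty$,
$$
\E\bigl[\,|\det X_1|^{-\tau} \,\bigm|\, \text{others}\,\bigr] \leq K\,|M_{11}|^{-\tau}.
$$
Taking expectations and applying the inductive hypothesis to $M_{11}$ gives $\E|\det X_1|^{-\tau} \leq K \cdot \E|M_{11}|^{-\tau} < \infty$. The form of the hypothesis $\sup_a \E|\xi-a|^{-\tau} < \infty$ is precisely what allows the arbitrary translation $R/M_{11}$ to be absorbed at each stage of the induction --- that is the main obstacle; the rest is packaging. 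With (a) and (b) in hand, Theorem \ref{theorem2} finishes the proof.
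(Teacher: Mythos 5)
Your proof is correct, and it verifies the two hypotheses of Theorem \ref{theorem2} exactly as the paper does, but your route to the moment bound is genuinely different. For (a) both you and the paper invoke Fact \ref{magic}; the paper does it by explicitly exhibiting $M_0=(b-a)I+aJ$ and arguing that an $\eps$-ball around it lies in the support, whereas you phrase the same idea more abstractly (an open cube $I^{d^2}$ minus $\{\det=0\}$ is an open subset of $\supp\mu$ inside $GL_d(\R)$, and $\Phi(M)=|\det M|^{-1/d}M$ is an open map). Both are fine. For (b) the approaches diverge: the paper controls $\|X^{-1}\|$ through $\max_{i,j}|X^{-1}_{i,j}|$ and then uses the Schur-complement identity $X^{-1}_{1,1}=(\xi-u^*C^{-1}v)^{-1}$ so that independence of $\xi$ from $(u,v,C)$ and $\sup_a\E|\xi-a|^{-\tau}<\infty$ finish it in one step, with no induction. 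You instead write $X^{-1}=(\det X)^{-1}\operatorname{adj}(X)$, dispose of $\operatorname{adj}(X)$ by positive moments, and reduce to $\E|\det X|^{-\tau'}<\infty$, which you prove by induction on $d$ via cofactor expansion $\det X=y_{11}M_{11}+R$; the key conditioning step (freeze everything but $y_{11}$ and absorb the shift using $\sup_a\E|\xi-a|^{-\tau}$) is in spirit the same trick the paper applies once to the Schur complement. Note that $\det X = M_{11}(\xi-u^*C^{-1}v)$, so the two computations are really the same algebra organized differently: the paper targets $X^{-1}_{1,1}$ directly and bypasses the induction, which is cleaner, while your version is more elementary and self-contained at the cost of the extra induction and a Cauchy--Schwarz step (and your stated moment threshold $\tau/(d-1)$ for $\|\operatorname{adj}(X)\|$ is a harmless underestimate: since each monomial is a product of $d-1$ \emph{distinct} i.i.d.\ entries, moments up to $\min(\tau,1)$ already exist). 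One small formality you should keep from the paper: since $X_1$ is only a.s.\ invertible, pass to a version $Y_1\in GL_d(\R)$ agreeing with $X_1$ a.s.\ before talking about $\mu$, $T_\mu$, and $\ell(Y_1)$.
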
 
 \begin{proof}
 Since $\xi$ is  continuous random variable, $X_1$ is invertible with probability one. For each $n\geq 1$, we can get an invertible random matrix $Y_n$  such that $X_n$ and $Y_n$ are equal with probability one, by defining $Y_n$ to be the identity matrix  on the set where $X_n$ is not invertible.
Let $\mu$ be the probability measure of $Y_1$ on $GL_d(\R)$ and $B$ be an open interval contained in the support of $\xi$. By the definition of $X_1$ and $Y_1$, we can see that the invertible matrices, whose all  elements are from  $B$, belong to the support of $\mu$. we can choose two numbers $a$ and $b$ from $B$ such that $M_0= (b-a)I+aJ$, where $I$ is the identity matrix  and $J$ is the matrix with all its elements equal to one, is invertible. Clearly $M_0$ is in the support of $\mu$. We can choose $\epsilon>0$ small enough such that every matrix in the $\epsilon$-neighbourhood of $M_0$ is invertible and has all its elements from $B$. That is, $\epsilon$-neighbourhood of $M_0$ is in the support of $\mu$. For every matrix $N$ in $$\mathbb A=\{N \in GL_d(\R); \|N-|\det{M_0}|^{-\frac{1}{d}}M_0\|<|\det{M_0}|^{-\frac{1}{d}}\epsilon, |\det{N}|=1\},$$ we have that $|\det{M_0}|^{\frac{1}{d}}N$ is in the $\epsilon$-neighbourhood of $M_0$, therefore in the support of $\mu$. Therefore
$$
\mathbb A \subseteq \{|\det{M}|^{-\frac{1}{d}}M; M \in T_{\mu}\}
$$ and $\mathbb A$ is an open set of $\{N \in GL_d(\R); |\det N|=1 \}$. By Fact \ref{magic}, $T_{\mu}$ is $p$-strongly irreducible and $p$-contracting, for any $1 \leq p \leq d$. 

Now in order to apply  Theorem \ref{theorem2} to the sequence of random matrices $\{Y_n, n\geq 1\}$, we need to show that for some $\tau> 0$, $\E e^{\tau \ell(Y_1)}$ is finite. Let $X$ be a random matrix with $i.i.d$ elements distributed like $\xi$.  Observe that
\begin{align*}
\E e^{\tau \ell(Y_1)} &= \E e^{\tau \ell(X)}= \E e^{\max (\log^+\|X\|, \log^+\|X^{-1}\|)}\\
&\leq \E \|X\|^{\tau} + \E \|X^{-1}\|^{\tau}\\
&\leq \E \l[\sum_{i,j=1}^d |X_{i,j}|\r]^{\tau} + \E \l[\sum_{i,j=1}^d |X^{-1}_{i,j}|\r]^{\tau}\\
& \hspace{5pt} (\mbox {where $X^{-1}_{i,j}$ is $i,j$-th element of $X^{-1}$})\\
&\leq d^{2\tau}\E \l[\max_{i,j} |X_{i,j}|\r]^{\tau} + d^{2\tau}\E \l[\max_{i,j} |X^{-1}_{i,j}|\r]^{\tau}\\
&= d^{2\tau}\E \max_{i,j} |X_{i,j}|^{\tau} + d^{2\tau}\E \max_{i,j} |X^{-1}_{i,j}|^{\tau}\\
&\leq  d^{2\tau }\E  \l[\sum_{i,j=1}^d |X_{i,j}|^{\tau}\r] + d^{2\tau }\E \l[\sum_{i,j=1}^d |X^{-1}_{i,j}|^{\tau}\r]\\
&= d^{2\tau +2}\E  |\xi|^{\tau} + d^{2\tau +2 }\E  |X^{-1}_{1,1}|^{\tau}.
\end{align*} Therefore we have,
$$
\frac{1}{2}[\E  |\xi|^{\tau} + \E  |X^{-1}_{1,1}|^{\tau}] \leq \E e^{\tau \ell(Y_1)} \leq d^{2\tau +2}\l[\E  |\xi|^{\tau} + \E  |X^{-1}_{1,1}|^{\tau}\r].
$$
If $X = \l(\begin{array}{cc}
\xi &   u^*\\
  v & C
\end{array}\r)$, then $X^{-1}_{1,1}= (\xi- u^*C^{-1} v)^{-1}$. Notice that $ u, v,C$ are independent of $\xi.$ Therefore by the given conditions on $\xi$, we have 
$$\frac{1}{d^{2\tau +2}}\E e^{\tau \ell(Y_1)} \leq \E  |\xi|^{\tau} +\E  |X^{-1}_{1,1}|^{\tau} \leq \E  |\xi|^{\tau}+\sup_{a \in \R} \E \l|{\xi-a}\r|^{-\tau} < \infty. $$ 
It is clear that  $\b_1(n), \b_2(n), \ldots , \b_d(n)$ are  the eigenvalues and  $\a_1(n), \a_2(n), \ldots , \a_d(n)$ are  the singular values of  $Y_nY_{n-1}\cdots Y_1$ for each $n$, with probability one. Since the distribution $\mu$ of $Y_1$ satisfies the conditions of Theorem \ref{theorem2},for any $r>0$ with probability one
$$
\lim_{n \to \infty} \frac{1}{n^r}\log \l(\frac{|\b_p(n)|}{\a_p(n)}\r)= 0.
$$
for all $1 \leq p \leq d$.
\end{proof}

\begin{remark}
If the random variable $\xi$ has bounded density, then  $\sup_{a \in \R} \E \l|{\xi-a}\r|^{-\tau} < \infty$ for any $0<\tau<1$. Let us say $f$ is the probability  density of $\xi$ ,bounded by $K$, then 
\begin{align*}
 \E \l|{\xi-a}\r|^{-\tau} &= \sup_{a \in \R} \int |x-a|^{-\tau} f(x)dx\\
&=  \int_{|x-a|<\epsilon} |x-a|^{-\tau} f(x)dx +  \int_{|x-a|\geq \epsilon} |x-a|^{-\tau} f(x)dx  \hspace{17pt} (\epsilon >0)\\
& \leq  K \int_{|x-a|<\epsilon} |x-a|^{-\tau} dx + \epsilon^{-\tau}\\
&= K \frac{2\epsilon^{1-\tau}}{1-\tau}+ \epsilon^{-\tau} < \infty.
\end{align*}
Since $\epsilon, K, \tau$ do not depend on $a$, we get that
 $$\sup_{a \in \R} \E \l|{\xi-a}\r|^{-\tau} < \infty.$$
\end{remark}

\noindent{\bf Acknowledgments:} The author is  grateful to Prof. Manjunath Krishnapur for suggesting this problem.

\bibliography{bibtex}
\bibliographystyle{amsplain}

\end{document}